
\documentclass[11pt,a4paper]{amsart}
\usepackage{amsmath}
\usepackage{amsfonts}
\usepackage{graphicx}
\usepackage{framed}
\usepackage{amssymb}
\usepackage{mathrsfs}

\usepackage[dvipsnames]{xcolor}
\definecolor{cornellred}{rgb}{0.7, 0.11, 0.11}

\usepackage[colorlinks=true,urlcolor=RoyalBlue,citecolor=RoyalBlue,linkcolor=RoyalBlue,linktocpage,pdfpagelabels,
bookmarksnumbered,bookmarksopen]{hyperref}

\usepackage{hyperref}

\def \N {\mathbb{N}}
\def \R {\mathbb{R}}
\def \C {\mathbb{C}}

\def \d {\mathrm{d}}

\def \LL {\mathcal{L}_{\alpha}}

\def \BE {\textbf{E}}
\def \BH {\textbf{H}}
\def \BA {\textbf{A}}
\def \Bj {\textbf{j}}
\DeclareMathOperator{\Div}{div}

\usepackage{amsthm}
\usepackage{graphicx}
\usepackage{caption}
\usepackage{subcaption}
\theoremstyle{definition}
\newtheorem{definition}{Definition}[section]

\newtheorem{remark}[definition]{Remark}

\theoremstyle{plain}
\newtheorem{theorem}[definition]{Theorem}

\newtheorem{lemma}[definition]{Lemma}

\makeatletter
\@namedef{subjclassname@2020}{\textup{2020} Mathematics Subject Classification}
\makeatother

\numberwithin{equation}{section}
\textwidth = 13.8cm


\begin{document}
\title[Schr\"odinger-Maxwell equations]{Schr\"odinger-Maxwell equations driven by mixed local-nonlocal operators}

\author[N.\,Cangiotti]{Nicol\`o Cangiotti}
\author[M.\,Caponi]{Maicol Caponi}
\author[A.\,Maione]{Alberto Maione}
\author[E.\,Vitillaro]{Enzo Vitillaro}

\address[N.\,Cangiotti]{Department of Mathematics\newline\indent Politecnico di Milano \newline\indent
via Bonardi 9, Campus Leonardo, 20133 Milan, Italy}
\email{nicolo.cangiotti@polimi.it}

\address[M.\,Caponi]{Dipartimento di Matematica e Applicazioni ``R. Caccioppoli''\newline\indent Università degli Studi di Napoli ``Federico II'' \newline\indent
Via Cintia, Monte S. Angelo, 80126 Naples, Italy}
\email{maicol.caponi@unina.it}

\address[A.\,Maione]{Abteilung f\"{u}r Angewandte Mathematik\newline\indent Albert-Ludwigs-Universit\"{a}t Freiburg \newline\indent Hermann-Herder-Strasse 10, 79104 Freiburg i. Br., Germany}
\email{amaione@crm.cat}

\address[E.\,Vitillaro]{Dipartimento di Matematica e Informatica DMI\newline\indent Università degli Studi di Perugia\newline\indent
Via Luigi Vanvitelli 1, 06123 Perugia, Italy}
\email{enzo.vitillaro@unipg.it}

\subjclass[2020]{35A01, 35A15, 35J50, 35J60, 35Q60, 35R11, 58E40}

\keywords{Nonlocal operators, fractional operators, variational methods, critical points theory, Schr\"odinger-Maxwell system}

\begin{abstract}
In this paper we prove existence of solutions to Schr\"odinger-Maxwell type systems involving mixed local-nonlocal operators.
Two different models are considered: classical Schr\"odinger-Maxwell equations and Schr\"odinger-Maxwell equations with a coercive potential, and the main novelty is that the nonlocal part of the operator is allowed to be nonpositive definite according to a real parameter.
We then provide a range of parameter values to ensure the existence of solitary standing waves, obtained as Mountain Pass critical points for the associated energy functionals.
\end{abstract}
\maketitle


\section{Introduction and main results}

Since it was introduced in the 1950s, the nonlinear Schr\"odinger equation could be considered a versatile tool in many concrete applications.
Indeed, its first appearances were strictly related to the study of superconductivity~\cite{Ginzburg1950} and superfluidity~\cite{Ginzburg1958}.
It was only later, during the 1960s, with the increasing of its physical importance, that many studies have been spreading in the physical mathematical community, however always connected with practical applications as the diffusion of optical beams~\cite{Chiao64} in a nonlinear medium.

Nowadays, many of the above quoted applications have become fundamental in many areas of Physics.
One of the most interesting problems arising from this kind of studies regards the interaction between the nonlinear Schr\"odinger equations and the electromagnetic field governed by the Maxwell equations.
Some of the models emerging from the system defined by the above type-equations have recently found applications in the electronic characterization of nanodevices~\cite{Pierantoni08,Pierantoni09}.

From a more theoretical perspective, the basic interpretation of the Schr\"odinger-Maxwell type systems is provided by the interaction between the electromagnetic field (Maxwell equations) and the particle field (Schr\"odinger equation).
In this work, we shall adopt such a point of view following the rigorous mathematical formalization firstly proposed by Benci and Fortunato~\cite{BF98}.

For the sake of completeness (with the aim of providing a clearer explanation), we briefly retrace in the following lines the mathematical argument that has led to the already classic formulation of the Schr\"odinger-Maxwell equations commonly studied in literature\footnote{Some authors refer to this system as the \emph{nonlinear Schr\"odinger-Poisson system}.
This ambiguity is motivated by the mathematical framework, in which the interacting field can be interpreted in a more general way, as for instance in terms of gravitational field.
Anyway, in this work we specifically study the electromagnetic case and so the label \emph{Maxwell} appears extremely natural.}. A similar argument can be found also in D'Avenia~\cite{dAvenia02}.

The starting point is the well-known nonlinear Schr\"odinger equation:
\begin{equation}\label{NLS}
-i\hbar\frac{\partial \psi}{\partial t}=\frac{\hbar^2}{2m}\Delta \psi +|\psi|^{p-2} \psi,
\end{equation}
with $p>2$ and where $\psi(x,t):\R^3 \times\R\to\C$ is the field describing a non-relativistic charged particle moving in the three dimensional space, $\hbar$ denotes the reduced Planck constant, and $m$ is the mass of the particle.

It is possible to interpret equation~\eqref{NLS} as the Euler-Lagrange equation with respect to the action
\[
\mathcal{S}=\iint_{\mathbb{R}^3\times\mathbb{R}}\mathcal{L}_{\text{free}} \, \d{x}\, \d{t},
\]
where
\begin{align*}
    \mathcal{L}_{\text{free}}=\frac{1}{2}\left[\hbar \left \langle i\frac{\partial \psi}{\partial t}, \psi\right \rangle - \frac{\hbar^2}{2m}|\nabla \psi|^2 \right]+\frac{1}{p}|\psi|^p,
\end{align*}
and, for any $z_j=x_j+iy_j\in \C$, $x_j, y_j \in \R$, with $j=1,2$, and $z\in\C$
\begin{align*}
    \langle z_1,z_2\rangle&=x_1x_2+y_1y_2,\\
    |z|&=\sqrt{\langle z,z\rangle}.
\end{align*}

We now introduce the electromagnetic field\footnote{The literature is divided between the two notations $\BH$ and \textbf{B} to denote the magnetic field. This is due to the two different notions of \emph{magnetic field}, which can be referred to the magnetic field strength (in ampere per meter $A/m$) or to the magnetic flux density (in tesla $T$). The relation joining these two vector quantities is determined, in the vacuum, by the \emph{vacuum permeability} $\mu_0$, indeed we have $\textbf{B}=\mu_0\BH$. In this work, following the International System of Units, we are using the symbol $\BH$ to preserve the symmetry with the electric field $\BE$ (whose unit is volt per meter $V/m$).} $(\BE,\BH)$, which can be described in terms of gauge potentials by using the first two Maxwell equations, namely
\begin{align*}
    \text{\BE} &=-\Bigg( \nabla \phi + \frac{\partial \text{\BA}}{\partial t}\Bigg),\\
    \text{\BH} &= \nabla \times \text{\BA},
\end{align*}
with
\begin{align*}
    \phi\, :& \quad \R^3 \times \R \to \R,\\
    \text{\BA}\, :& \quad \R^3\times \R \to \R^3.
\end{align*}
Let us suppose that the electromagnetic field is not assigned\footnote{The case of the assigned electromagnetic field is deeply studied.
The interested reader can see e.g.~\cite{Esteban89} for the nonlinear Schr\"odinger equation.}, so that the interaction between the fields $\psi$ and $(\BE,\BH)$ can be expressed by the \emph{rule of minimal coupling}, i.e., by substituting the ordinary derivatives with the so-called \emph{Weyl covariant derivatives}~\cite{Felsager81}:
\begin{align*}
\frac{\partial}{\partial t} \, \mapsto& \, \frac{\partial }{\partial t}+\frac{iq}{\hbar}\phi,\\
\nabla\, \mapsto& \, \nabla - \frac{iq}{\hbar}\BA.
\end{align*}
Here $q$ denotes the electrical charge.

Under these hypotheses, let us consider the following action:
\begin{align*}
    \mathcal{S}=\iint_{\mathbb{R}^3\times\mathbb{R}}\left(\mathcal{L}_{\text{int}}+ \mathcal{L}_{\text{emf}}\right) \, \d{x}\, \d{t}
\end{align*}
in which the two terms $\mathcal{L}_{\text{int}}$ and $\mathcal{L}_{\text{emf}}$ represent, respectively, the Lagrangian density of $\psi$ interacting with $(\BE, \BH)$, and the Lagrangian density of the electromagnetic field, namely
\begin{align*}
    \mathcal{L}_{\text{int}}=& \frac{1}{2}\left[\hbar \left \langle i\frac{\partial \psi}{\partial t},\psi \right \rangle - q\phi|\psi|^2-\frac{\hbar^2}{2m}\left \vert \nabla \psi - \frac{iq}{\hbar}\BA\psi\right \vert^2 \right]+\frac{1}{p}|\psi|^p,\\
    \mathcal{L}_{\text{emf}}=& \frac{1}{8\pi}\left(\left\vert\nabla \phi + \frac{\partial \BA}{\partial t}\right\vert^2-|\nabla \times \BA|^2\right).
\end{align*}

If we now take solutions of the form
\[
\psi(x,t)=u(x,t)e^{iS(x,t)/\hbar},
\]
we immediately get
\begin{align*}
    \mathcal{L}_{\text{int}}=\frac{1}{2}\left[-\frac{\hbar^2}{2m} |\nabla u|^2 - \left( \frac{\partial S}{\partial t}+q\phi+\frac{1}{2m}|\nabla S-q\BA|^2\right)u^2 \right]+\frac{1}{p}|u|^p.
\end{align*}
The Euler-Lagrange equations with respect the functional $\mathcal{S}(u,S,\phi,\BA)$ are
\begin{gather}
    -\frac{\hbar^2}{2m}\Delta u+\left( \frac{\partial S}{\partial t}+q\phi+\frac{1}{2m}|\nabla S -q \BA|^2\right)u-|u|^{p-2}u=0; \label{EL1}\\
    \frac{\partial}{\partial t}u^2+\frac{1}{m}\Div\left[(\nabla S-q\BA)u^2\right]=0;\label{EL2}\\
    qu^2=-\frac{1}{2\pi}\Div\left(\frac{\partial\BA}{\partial t}+\nabla \phi\right);\label{EL3}\\
    \frac{q}{2m}(\nabla S-q\BA)u^2=\frac{1}{4\pi}\left[ \frac{\partial }{\partial t}\left(\frac{\partial \BA }{\partial t}+\nabla \phi \right)+\nabla \times (\nabla \times \BA)\right]. \label{EL4}
\end{gather}
Let us now set
\[
\rho=\frac{qu^2}{2} \quad \text{and} \quad \Bj=\frac{q}{2m}(\nabla S-q\BA)u^2.
\]

Thus, equations~\eqref{EL2},~\eqref{EL3}, and~\eqref{EL4} become
\begin{gather}
    \frac{\partial \rho}{\partial t}+\Div\Bj=0; \label{Ce}\\[5pt]
    \Div\BE=4\pi\rho;  \label{Me1}\\
    \nabla \times \BH-\frac{\partial \BE}{\partial t}=4\pi\Bj. \label{Me2}
\end{gather}

Note that one can interpret $\rho$ as the charge density and $\Bj$ as the current density. In this view, equation~\eqref{Ce} is actually the continuity equation, and equations~\eqref{Me1} and~\eqref{Me2} are two of the Maxwell equations (precisely Gauss and Ampère equations).

Finally, since we are interested in the study of standing waves in the electrostatic case, we take
\[
u=u(x), \quad S=\omega t, \quad \phi=\phi(x), \quad \BA=0,
\]
with $\omega>0$.

This choice implies that equations~\eqref{EL2} and~\eqref{EL4} are identically satisfied, and the couple of equations~\eqref{EL1} and~\eqref{EL3} becomes the so-called Schr\"odinger-Maxwell equations:
\begin{gather*}
-\frac{\hbar^2}{2m}\Delta u+\omega u + q\phi u-|u|^{p-2}u=0,\\
-\Delta\phi=2\pi qu^2.
\end{gather*}

The mathematical formulation of the problem, exposed in the previous lines has a dual purpose.
On the one hand, we want to highlight the physical value of such a model justifying the algebraic steps providing the physical interpretation.
On the other hand, there is no doubt that these kinds of problems, arising from Physics, constitute a stimulating challenge for their intrinsic mathematical properties.

Indeed, many different questions can be addressed starting from the above equations, whether with a clear physical meaning or purely mathematical.
This scenario is pretty common, especially in the interdisciplinary domains as the Mathematical Physics.
For all these reasons, in this manuscript we try to combine two different approaches: one closely related to a physical model and one purely mathematical.
This dual perspective could offer a useful framework for those tackling this type of problems from both a physical and a mathematical point of view.

More specifically, our plan is to replace the classical Laplace operator $\Delta$ with mixed local-nonlocal operators, involving the fractional Laplacian.
Without going into technical details, we recall that the introduction of the fractional Schr\"odinger equation due to Laskin~\cite{Laskin02} (who was inspired by the Feynman path integral approach to quantum mechanics) has paved the way to a prolific field of studies based on various approaches.

Among other, we want to mention two works regarding the search of ground state solutions: the first one by Secchi~\cite{Secchi04}, who adopted a variational method based on minimization on the Nehari manifold, and the second one written by Ambrosio~\cite{Ambrosio2016}, who considered weaker assumptions (than the Ambrosetti–Rabinowitz condition) for cases where the potential is $1$-periodic or is bounded.
At the same time, our choice is also strongly motivated by an increasing interest in the study of fractional calculus \emph{per se}, as well explained by the authors in~\cite{CCMV23} (see also~\cite{GMV,MMV} for further references).
\medskip

Getting to the heart of the work, we shall deal with generalized Schr\"odinger-Maxwell (SM) type systems of the form
\begin{equation}\label{eq:problem0}
\begin{cases}
    \frac{\hbar^2}{2m}\mathcal{L}_\alpha u +\omega u+q\Phi u-|u|^{p-2}u=0&\text{in $\R^3$},\\
    -\Delta\Phi=2\pi qu^2&\text{in $\R^3$},
\end{cases}
\end{equation}
where $\hbar>0$ is the reduced Planck constant, $m>0$ is the mass of the particle, $\omega>0$, $q\in\{\pm 1\}$, and $p\in(2,2^*)$. Here $2^*$ denotes the classical Sobolev critical exponent $2^*=\dfrac{2n}{n-2}$ in dimension $n=3$, that is $2^*=6$.

The operator $\LL$ is a mixed local-nonlocal one of the following form
\begin{equation}\label{Lalfa}
    \LL = \LL^s := -\Delta +\alpha (-\Delta)^s,
\end{equation}
where $\alpha \in \R$, $\Delta$ denotes the classical Laplacian, and $(-\Delta)^s$, $s\in (0,1)$, denotes the fractional Laplacian, which we shall introduce in the sequel.

To simplify the exposition, we choose $q=1$, that is we consider the (SM) type system
\begin{equation}\label{eq:problem}
\begin{cases}
    \frac{\hbar^2}{2m}\mathcal{L}_\alpha u + u+\Phi u-|u|^{p-2}u=0&\text{in $\R^3$},\\
    -\Delta\Phi=2\pi u^2&\text{in $\R^3$}.
\end{cases}
\end{equation}
Indeed, if $(u,\Phi)$ is a solution of~\eqref{eq:problem}, then $(u,-\Phi)$ is also a solution of~\eqref{eq:problem0} with $q=-1$.

Before stating our main results, it seems appropriate to lay out a short survey of the existing literature on the topics and some of the motivations that lead us to focus on the generalization introduced in the system~\eqref{eq:problem}.
\medskip

In the recent paper~\cite{CCMV23}, the authors have generalized the  Klein-Gordon-Maxwell type systems (KGM) to the setting of mixed local-nonlocal operators, where the nonlocal one is allowed to be nonpositive definite according to a real parameter.
They provided a range of parameter values to ensure the existence of solitary waves in terms of Mountain Pass critical points for the associated energy functionals. Following the existing literature they considered two different classes of potentials: constant potentials and continuous,
bounded from below, and coercive potentials.

This paper keeps the same spirit replacing the Klein-Gordon equation with the Schr\"odinger equation. Our aim is to continue on the research line opened by the already mentioned paper of Benci and Fortunato~\cite{BF98}, which has been studied heavily alongside KGM.
Indeed, at the beginning of the 2000s Coclite and Georgiev~\cite{Coclite04}, following the original work of Benci and Fortunato, proved the existence of a sequence of radial solitary waves for these equations with a fixed $L^2$ norm and analyzed the asymptotic behavior and the smoothness of such solutions.

However, the above papers treated the linear case, while the majority of the results focused on the nonlinear case, for which important achievements have been obtained on existence, nonexistence, multiplicity, and stability.
Specifically, in 2002, D'Avenia~\cite{dAvenia02} proved the existence of non-radially symmetric wave solutions of nonlinear Schr\"odinger equation coupled with Maxwell equations.
Two years later, D'Aprile and Mugnai approached to the SM systems considering the theory of Mountain Pass critical points for the associated energy functional, in order to show the existence of radially symmetric solitary waves~\cite{Dap1} and, moreover, they obtain some non-existence results based on a suitable Pokhozhaev's identity~\cite{Dap2}.
Other impressive existence and nonexistence results were provided by Ruiz~\cite{Ruiz06} in 2006, in which he closed the gap of the previous works linked to the range of the parameter $p$. Since 2008, many authors faced the problems of SM type-systems with different kind of potentials.
Standing out among the others the works of Azzollini and Pomponio~\cite{Azzolini08}, Chen and Tang~\cite{Chen09}, and Zhao and Zhao~\cite{Zhao08}.

The purpose of the present manuscript is precisely to generalize this kind of results by considering the mixed local-nonlocal operator $\LL$, introduced in~\eqref{Lalfa}.
In a similar context, other recent papers studied the existence of solutions for analogous generalized system of equations.
In particular, the case of nonlinear fractional Schr\"odinger-Maxwell systems has been addressed by Zhang, do Ó, and Squassina~\cite{Zhang16} through a perturbation approach in the subcritical and critical case.
\smallskip

We can now proceed with the statements of the main results of this paper.

\subsection*{I. Existence results for the Schr\"odinger-Maxwell equations}

We introduce the function $\alpha_0\colon (0,1)\times (0,\infty)\to (0,\infty)$, which is defined by
\begin{align*}
	\alpha_0(s,t):=s^{-s}(1-s)^{s-1}t^{1-s}\quad\text{for $s\in(0,1)$ and $t\in(0,\infty)$},
\end{align*}
and we denote the Hilbert space
\begin{equation}\label{space_sol_Phi}
    \mathcal{D}^{1,2}(\R^3):=\overline{{\rm\bf C}_c^\infty(\R^3)}^{\|\nabla\,\cdot\,\|_2}.
\end{equation}
\begin{theorem}\label{mainthm}
Assume that
\begin{equation}\label{eq:alpha}
    \alpha>-\alpha_0\left(s,\frac{2m\omega}{\hbar^2}\right).
\end{equation}
\begin{itemize}
    \item If $p\in(4,6)$, then problem~\eqref{eq:problem} admits infinitely many radially symmetric solutions $(u_n,\Phi_n)\in H^1(\R^3)\times\mathcal{D}^{1,2}(\R^3)$.
    \item If $p=4$, then problem~\eqref{eq:problem} admits a radially symmetric solution $(u,\Phi)\in H^1(\R^3)\times\mathcal{D}^{1,2}(\R^3)$.
\end{itemize}
\end{theorem}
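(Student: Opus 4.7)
The strategy is the classical reduction-variational approach for Schr\"odinger-Maxwell systems, adapted to the mixed operator $\LL$. First, I would solve the second equation in~\eqref{eq:problem} for $\Phi$ given $u\in H^1(\R^3)$: since $u^2\in L^{6/5}(\R^3)$ by Sobolev embedding, the Hardy-Littlewood-Sobolev estimate (equivalently Lax-Milgram on $\mathcal{D}^{1,2}(\R^3)$) produces a unique $\Phi_u\in\mathcal{D}^{1,2}(\R^3)$ given by the Newtonian convolution
\[
\Phi_u(x)=\frac{1}{2}\int_{\R^3}\frac{u^2(y)}{|x-y|}\,\d y.
\]
The map $u\mapsto\Phi_u$ is $C^1$ and positively homogeneous of degree two, and $(u,\Phi)$ solves~\eqref{eq:problem} if and only if $\Phi=\Phi_u$ and $u$ is a critical point of the reduced functional
\[
I(u)=\frac{\hbar^2}{4m}\bigl(\|\nabla u\|_2^2+\alpha\,[u]_s^2\bigr)+\frac{1}{2}\|u\|_2^2+\frac{1}{4}\int_{\R^3}\Phi_u u^2\,\d x-\frac{1}{p}\|u\|_p^p,
\]
where $[u]_s^2$ denotes the Gagliardo seminorm associated to $(-\Delta)^s$.

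The delicate point is that the quadratic part of $I$ must still define a norm equivalent to the $H^1$ norm, even when $\alpha<0$. This is exactly where assumption~\eqref{eq:alpha} enters. From the interpolation $[u]_s^2\le\|\nabla u\|_2^{2s}\|u\|_2^{2(1-s)}$ combined with the weighted Young inequality $X^sY^{1-s}\le s\lambda X+(1-s)\lambda^{-s/(1-s)}Y$ for any $\lambda>0$, optimization over $\lambda$ shows that
\[
|\alpha|\,[u]_s^2\le\|\nabla u\|_2^2+\tfrac{2m\omega}{\hbar^2}\|u\|_2^2\qquad\text{as soon as}\qquad |\alpha|\le\alpha_0\!\left(s,\tfrac{2m\omega}{\hbar^2}\right),
\]
the inequality being strict precisely when~\eqref{eq:alpha} holds. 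Hence under~\eqref{eq:alpha} the quadratic part of $I$ controls the full $H^1$ norm, and $I$ is of class $C^1$ on $H^1(\R^3)$. The same sharp threshold was already computed in~\cite{CCMV23} for the Klein-Gordon-Maxwell setting.

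Next, to recover compactness I would restrict $I$ to the radial subspace $H^1_r(\R^3)$; by Palais' principle of symmetric criticality, critical points of $I|_{H^1_r(\R^3)}$ are genuine critical points of $I$ on $H^1(\R^3)$. On $H^1_r(\R^3)$ the embeddings into $L^q(\R^3)$ are compact for every $q\in(2,6)$, and the nonlocal map $u\mapsto\int\Phi_u u^2\,\d x$ is $C^1$ and weakly continuous. Standard arguments then yield the Palais-Smale condition: (PS)$_c$ sequences are bounded thanks to the coercivity of the quadratic part together with the Ambrosetti-Rabinowitz-type estimate obtained by combining $pI(u_n)-\langle I'(u_n),u_n\rangle$ (positive for $p\ge 4$, using $\int\Phi_u u^2\,\d x\ge 0$), and weakly convergent subsequences converge strongly by the radial compact embeddings together with weak continuity of the Coulomb term.

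Finally, the Mountain Pass geometry has to be checked. The quadratic part provides a strict local minimum at $0$, while along rays $u=t\bar u$ one has $I(t\bar u)\sim\tfrac{t^2}{2}\|\bar u\|_{H^1}^2+\tfrac{t^4}{4}\int\Phi_{\bar u}\bar u^2\,\d x-\tfrac{t^p}{p}\|\bar u\|_p^p$. For $p\in(4,6)$ the $L^p$ term dominates as $t\to\infty$, so $I(t\bar u)\to-\infty$ for every $\bar u\not\equiv 0$; since $I$ is even, the Symmetric Mountain Pass (or Fountain) theorem furnishes an unbounded sequence of radial critical values, hence infinitely many radial solutions. The borderline case $p=4$ is the main obstacle, because $\tfrac14\int\Phi_u u^2\,\d x$ and $\tfrac14\|u\|_4^4$ are both $4$-homogeneous: the descent to $-\infty$ is no longer automatic and requires explicitly producing a test function $\bar u$ with $\int\Phi_{\bar u}\bar u^2\,\d x<\|\bar u\|_4^4$ (a rescaled radial bump of sufficiently concentrated profile does the job). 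This yields a single Mountain Pass level, producing one nontrivial radial solution and accounting for the weaker conclusion in the second part of the theorem.
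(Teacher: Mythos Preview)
Your proposal is correct and follows essentially the same route as the paper: reduction to the single functional via $\Phi_u$, coercivity of the quadratic part under~\eqref{eq:alpha} through the interpolation inequality $\|(-\Delta)^{s/2}u\|_2^2\le s\varepsilon\|\nabla u\|_2^2+(1-s)\varepsilon^{-s/(1-s)}\|u\|_2^2$, restriction to $H^1_r(\R^3)$ by symmetric criticality, the combination $pJ(u_n)-J'(u_n)[u_n]$ (together with $\int\Phi_{u_n}u_n^2\,\d x\ge 0$ for $p\ge 4$) to bound (PS) sequences, and the symmetric Mountain Pass for $p\in(4,6)$ versus ordinary Mountain Pass for $p=4$. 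The only cosmetic difference is in the second geometric condition when $p=4$: rather than first producing a test function with $\int\Phi_{\bar u}\bar u^2\,\d x<\|\bar u\|_4^4$ and then moving along the ray $t\bar u$, the paper applies a single two-parameter rescaling $u_{\lambda}(x)=\lambda^{2\beta}u(\lambda^{\beta}x)$ and observes that $\|u_\lambda\|_4^4\sim\lambda^{5\beta}$ dominates all remaining terms (in particular $\int\Phi_{u_\lambda}u_\lambda^2\,\d x\sim\lambda^{3\beta}$), which is exactly your concentration argument carried out in one stroke.
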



\subsection*{II. Existence results for the Schr\"odinger-Maxwell equations involving a coercive potential}

In the last part of the paper, motivated by the recent literature, we study the following variant of the Schr\"odinger-Maxwell system, involving coercive potentials:
\begin{equation}\label{eq:problem2}
\begin{cases}
    \frac{\hbar^2}{2m}\mathcal{L}_\alpha u +V u+\Phi u-|u|^{p-2}u=0&\text{in $\R^3$},\\
    -\Delta\Phi=2\pi u^2&\text{in $\R^3$}.
\end{cases}
\end{equation}
Here $V\colon\R^3\to\R$ satisfies:
\begin{itemize}
    \item[$(V_1)$] $V\in C(\R^3)$;
    \item[$(V_2)$] $ V_0:=\inf_{x\in\R^3}V(x)>-\infty$;
    \item[$(V_3)$] there exists $h>0$ such that	
    \begin{equation*}
        \lim_{|y|\to\infty}|\{x\in B_h(y)\,:\,V(x)\le M\}|=0\quad\text{for all $M>V_0$},
	\end{equation*}
which is trivially satisfied when $\displaystyle \lim_{|x|\to\infty}V(x)=\infty$.
\end{itemize}
Here the space of solutions for $u$ is
$$W:=\left\{u\in H^1(\R^3)\,:\, \int_{\R^3}(V-V_0)u^2\,\d x<\infty\right\}.$$
Clearly $W$ trivially reduces to $H^1(\mathbb{R}^3)$ in the main case.

The second main result of the paper is the following one.
\begin{theorem}
\label{mainthm2}
Assume the validity of conditions $(V_1)$--$(V_3)$.
\begin{itemize}
    \item If $p\in(4,6)$, then problem~\eqref{eq:problem2} admits infinitely many solutions $(u_n,\Phi_n)\in W\times\mathcal{D}^{1,2}(\R^3)$.
    \item If $p=4$, $V_0>0$, and
    \begin{equation}\label{eq:alpha2}
        \alpha>-\alpha_0\left(s,\frac{2mV_0}{\hbar^2}\right),
    \end{equation}
    then problem~\eqref{eq:problem2} admits a solution $(u,\Phi)\in W\times\mathcal{D}^{1,2}(\R^3)$.
\end{itemize}
\end{theorem}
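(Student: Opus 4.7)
The plan is to follow the Benci–Fortunato reduction scheme already used in Theorem~\ref{mainthm}, now working on the weighted space $W$ in place of the radial subspace of $H^1(\mathbb{R}^3)$, and using $(V_3)$ in place of Strauss's radial compactness. I would first equip $W$ with the inner product
\begin{equation*}
(u,v)_W := \int_{\mathbb{R}^3}\nabla u\cdot\nabla v\,\mathrm{d}x + \int_{\mathbb{R}^3}(V-V_0+1)\,uv\,\mathrm{d}x,
\end{equation*}
verify that $(W,(\cdot,\cdot)_W)$ is a Hilbert space, and establish by a classical Bartsch–Wang-type argument that $(V_1)$–$(V_3)$ yield the compact embedding $W\hookrightarrow L^q(\mathbb{R}^3)$ for every $q\in[2,6)$. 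Lax–Milgram applied to $-\Delta\Phi=2\pi u^2$ then associates to each $u\in W$ a unique nonnegative $\Phi_u\in\mathcal{D}^{1,2}(\mathbb{R}^3)$, with $u\mapsto\Phi_u$ of class $C^1$, bounded-to-bounded, weakly continuous, and satisfying $\int_{\mathbb{R}^3}\Phi_u u^2\,\mathrm{d}x\le C\|u\|_{12/5}^4$. Substituting $\Phi=\Phi_u$ in the first line of~\eqref{eq:problem2} reduces the problem to finding critical points of
\begin{equation*}
J(u):=\frac{\hbar^2}{4m}\Bigl(\|\nabla u\|_2^2+\alpha\,[u]_s^2\Bigr)+\frac{1}{2}\int_{\mathbb{R}^3}V u^2\,\mathrm{d}x+\frac{1}{4}\int_{\mathbb{R}^3}\Phi_u u^2\,\mathrm{d}x-\frac{1}{p}\|u\|_p^p,
\end{equation*}
where $[u]_s^2$ denotes the fractional seminorm associated with $(-\Delta)^s$.

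The quadratic part of $J$ is manifestly under control when $\alpha\ge0$; when $\alpha<0$, the Gagliardo–Nirenberg interpolation $[u]_s^2\le\|\nabla u\|_2^{2s}\|u\|_2^{2(1-s)}$ combined with Young's inequality shows that $\alpha_0(s,t)$ is precisely the sharp threshold making $\|\nabla u\|_2^2+\alpha[u]_s^2+t\|u\|_2^2\ge0$ for all $u$. In the critical case $p=4$, the assumptions $V_0>0$ and~\eqref{eq:alpha2} therefore deliver a coercive quadratic form on $W$, equivalent up to constants to $\|\cdot\|_W^2$. In the superquadratic regime $p\in(4,6)$ no such lower bound on $\alpha$ is needed: the $L^p$-term eventually dominates any finite $L^2$-deficit coming from $(V_2)$, so only the boundedness of the quadratic form is used, which is granted by the fractional Sobolev embedding.

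For $p\in(4,6)$, $J$ is even; since $\int\Phi_u u^2$ scales as $\|u\|^4$ whereas the nonlinearity is of order $\|u\|^p$ with $p>4$, the Ambrosetti–Rabinowitz condition holds with $\theta=4$, so Palais–Smale sequences are bounded. The compact embedding $W\hookrightarrow L^q$, $q\in[2,6)$, together with the weak continuity of $u\mapsto\Phi_u$, upgrades weak to strong convergence, and the Symmetric Mountain Pass Theorem produces the desired sequence $(u_n,\Phi_{u_n})$. For $p=4$, the same compactness arguments verify Palais–Smale, while~\eqref{eq:alpha2} and $V_0>0$ guarantee the mountain pass geometry, and the classical Mountain Pass Theorem delivers a nontrivial critical point.

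The main obstacle is precisely the $p=4$ case: the coupling term $\int\Phi_u u^2$ and the nonlinearity $\|u\|_4^4$ scale identically under $L^2$-rescalings, so both the mountain-pass geometry and the a priori boundedness of Palais–Smale sequences become critical. The hypotheses $V_0>0$ and~\eqref{eq:alpha2} are designed to tip this balance in favour of the quadratic part, mirroring exactly the role played by~\eqref{eq:alpha} with $\omega>0$ in Theorem~\ref{mainthm}; condition $(V_3)$ then plays the role that Strauss's radial compactness played in the constant-potential setting, restoring the compactness otherwise lost on the whole space.
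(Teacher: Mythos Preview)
Your outline for $p=4$ is essentially the paper's, and your reduction to the one--variable functional via Lax--Milgram, together with the Bartsch--Wang compactness coming from $(V_3)$, is exactly right.

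The genuine gap is in the range $p\in(4,6)$. There \emph{no} condition on $\alpha$ is assumed and $V_0$ is only finite, so the quadratic part
\[
\mathcal{B}_{\alpha,V}(u,u)=\frac{\hbar^2}{2m}\bigl(\|\nabla u\|_2^2+\alpha[u]_s^2\bigr)+\int_{\R^3}Vu^2\,\d x
\]
can be \emph{indefinite} on $W$. Two things then break. First, condition~$(i)$ of the Symmetric Mountain Pass asks for $J\ge\delta>0$ on a small sphere in $X_2$; if $X_2=W$ this fails whenever $\mathcal{B}_{\alpha,V}$ has a negative direction, and your remark that ``the $L^p$-term eventually dominates any finite $L^2$-deficit'' goes the wrong way for small $\varrho$. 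The paper fixes this by using the compact embedding $W\hookrightarrow L^2$ to diagonalize $\mathcal{L}_{\alpha,V}$, obtaining a finite--dimensional subspace $H_{k_0}$ spanned by the eigenfunctions with eigenvalue $\le0$, and then applying Theorem~\ref{Rabinowitz86} with $X_1=H_{k_0}$ and $X_2=\mathbb{P}_{k_0}$, on which $\mathcal{B}_{\alpha,V}$ \emph{is} coercive. Second, the Ambrosetti--Rabinowitz computation with $\theta=p$ only yields
\[
pJ(u_n)-J'(u_n)[u_n]\ge\Bigl(\tfrac{p}{2}-1\Bigr)\mathcal{B}_{\alpha,V}(u_n,u_n),
\]
which does \emph{not} bound $\|u_n\|_W$ when $\mathcal{B}_{\alpha,V}$ is indefinite; your assertion that ``AR with $\theta=4$ gives boundedness'' is therefore unjustified here. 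The paper instead argues by contradiction: assuming $\|u_n\|_W\to\infty$, the normalized sequence $w_n=u_n/\|u_n\|_W$ converges (by the compactness you invoke) strongly in $L^2$ and in $L^p$, and one derives $\|w\|_2>0$ from the first inequality while $\|w\|_p=0$ from dividing the energy bound by $\|u_n\|_W^p$ and using $p>4$. Both the spectral splitting and this normalized--sequence argument are missing from your sketch and are the places where the proof for $p\in(4,6)$ actually lives.
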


\medskip
The paper is organized as follows. In Section~\ref{Sect2}, we outline our main assumptions, notations, and the preliminary aided further to both cases (I) and (II).
In Sections~\ref{Sect3} and~\ref{Sect4}, we shall study the cases (I) and (II), respectively, providing the proofs of Theorem~\ref{mainthm} and Theorem~\ref{mainthm2}.



\section{Assumptions, notations, and preliminary results}\label{Sect2}
\subsection{Functional setting}
We recall that the Sobolev space $H^1(\R^3)$ is defined by
\begin{equation*}
H^1(\R^3)=\{u\in L^2(\R^3)\,:\,\nabla u\in L^2(\R^3;\R^3)\},
\end{equation*}
and it is a Hilbert space endowed with the norm
\begin{align*}
\|u\|_{H^1}^2:=\|u\|_2^2+\|\nabla u\|_2^2\quad\text{for $u\in H^1(\R^3)$}.
\end{align*}
We denote by $\mathscr{F}$ the Fourier transform, defined for all functions $\varphi\in\mathcal{S}(\mathbb{R}^3)$ (the Schwartz space of rapidly decreasing smooth functions) by
\begin{equation}\label{eq:Fourier}
    \mathscr{F}\varphi(\xi):=\frac{1}{(2\pi)^\frac{3}{2}}\int_{\R^3}e^{-i\langle\xi,x\rangle}\varphi(x) \,\d x\quad\text{for }\xi\in\R^3,
\end{equation}
and then extended by density to the space of tempered distributions.
By Plancharel theorem, $\mathscr{F}$ is an isometric isomorphism from $L^2(\R^3;\C)$ onto $L^2(\R^3;\C)$.

Given any $s\in(0,1)$, the fractional Sobolev space $H^s(\R^3)$ is equivalently defined by
\begin{equation*}
H^s(\R^3)=\left\{u\in L^2(\R^3)\,:\,\int_{\R^3}(1+|\xi|^{2s})|\mathscr{F}u(\xi)|^2\,\d\xi<\infty\right\},
\end{equation*}
see e.g.~\cite[Section~3]{DRV}, and it is a Hilbert space when endowed with the norm
\begin{align*}
\|u\|_{H^s}^2:=\int_{\R^3}(1+|\xi|^{2s})|\mathscr{F}u(\xi)|^2\,\d\xi\quad\text{for $u\in H^s(\R^3)$}.
\end{align*}
We notice that $H^1(\R^3)$ is continuously embedded into $H^s(\R^3)$ by Plancharel Theorem, since for all $u\in H^1(\R^3)$ we have
\begin{align*}
\int_{\R^3}|\xi|^{2s}|\mathscr{F}u(\xi)|^2\,\d\xi&\le (1-s)\int_{\R^3}|\mathscr{F}u(\xi)|^2\,\d\xi+s\int_{\R^3}|\xi|^2|\mathscr{F}u(\xi)|^2\,\d\xi\\
&=(1-s)\|u\|_2^2+s\|\nabla u\|_2^2.
\end{align*}

Let $(-\Delta)^su$ denotes the fractional Laplacian of $u$, which is defined via Fourier transform for functions $\varphi\in\mathcal{S}(\mathbb{R}^3)$ by
\[
(-\Delta)^s\varphi(x)=\mathscr{F}^{-1}(|\xi|^{2s}\mathscr{F}\varphi(\xi))(x)\quad\text{for $x\in\R^3$}.
\]
By Plancherel Theorem, we have
\begin{equation*}
H^s(\R^3)=\{u\in L^2(\R^3)\,:\,(-\Delta)^\frac{s}{2}u\in L^2(\R^3)\}
\end{equation*}
and
\begin{equation*}
\|u\|_{H^s}^2=\|u\|_2^2+\|(-\Delta)^{\frac{s}{2}}u\|_2^2.
\end{equation*}
In particular, for all $u\in H^1(\R^3)$ and $\varepsilon>0$, we have
\begin{equation}\label{young}
    \|(-\Delta)^{\frac{s}{2}}u\|_2^2=\int_{\R^3}|\xi|^{2s}|\mathscr{F}u(\xi)|^2\,\d\xi\le (1-s)\varepsilon^{-\frac{s}{1-s}}\|u\|_2^2+s\varepsilon\|\nabla u\|_2^2.
\end{equation}
Therefore, the fractional Laplacian can be interpreted as an operator
$$(-\Delta)^s\colon H^s(\R^3)\to H^{-s}(\R^3):=(H^s(\R^3))',$$
defined for all $u,v\in H^s(\R^3)$ by
\begin{equation*}
    \langle (-\Delta)^su,v\rangle_{H^{-s}(\R^3)\times H^s(\R^3)}:=\int_{\R^3}(-\Delta)^\frac{s}{2}u(-\Delta)^\frac{s}{2}v\,\d x.
\end{equation*}

\begin{remark}
We recall that the fractional Sobolev space $H^s(\R^3)$ can also be defined via the Gagliardo seminorm $[\,\cdot\,]_{s,2}$ as
\begin{equation*}
    H^s(\R^3):=\left\{u\in L^2(\R^3)\,:\,[u]_{s,2}^2:=\int_{\R^3}\int_{\R^3}\frac{|u(x)-u(x)|^2}{|x-y|^{3+2s}} \,\d x \,\d y<\infty\right\}.
\end{equation*}
Indeed, we have
\begin{equation*}
    \frac{1}{2}C(s)[u]_{s,2}^2=\int_{\R^3}|\xi|^{2s}|\mathscr{F}u(\xi)|^2\,\d\xi\quad\text{for all $u\in H^s(\R^3)$},
\end{equation*}
where the constant $C(s)$ is given by
\begin{equation}\label{eq:Cs}
    C(s):=\left(\int_{\R^3}\frac{1-\cos (x_1)}{|x|^{3+2s}} \,\d x\right)^{-1},
\end{equation}
see e.g.~\cite[Proposition~3.4 and Proposition~3.6]{DRV}. In particular, the fractional Laplacian can be defined for $\varphi\in\mathcal{S}(\R^3)$ as
\begin{equation*}
    (-\Delta)^s \varphi(x) := C(s) \,\mbox{P.V.}\int_{\R^3}\frac{\varphi(x)-\varphi(y)}{|x-y|^{3+2s}} \,\d y\quad\text{for $x\in\R^3$},
\end{equation*}
where P.V. denotes the Cauchy principal value, that is
\begin{equation*}
    \mbox{P.V.}\int_{\R^3}\frac{u(x)-u(y)}{|x-y|^{3+2s}} \,\d y:=\lim_{\varepsilon\to 0^+}\int_{\{y\in \R^3\,:\,|y-x|\ge \varepsilon\}}\frac{u(x)-u(y)}{|x-y|^{3+2s}} \,\d y,
\end{equation*}
and the constant $C(s)$ is the one defined by~\eqref{eq:Cs}.
\end{remark}

For the reader's convenience, we recall the definition of the mixed local-nonlocal operator $\mathcal{L}_\alpha$, $\alpha\in\R$, given in~\eqref{Lalfa}, that is
\begin{equation*}
\mathcal{L}_\alpha u := -\Delta u +\alpha (-\Delta)^s u.
\end{equation*}
Here $\Delta u$ denotes the classical Laplace operator, while $(-\Delta)^su$ is the fractional Laplacian.
We can then interpret $\mathcal{L}_\alpha$ as an operator
\[
\mathcal{L}_\alpha\colon H^1(\R^3)\to H^{-1}(\R^3):=(H^1(\R^3))',
\]
to which we can naturally associate a bilinear form as follows.

\begin{definition}\label{def:Balpha}
The bilinear form $\mathcal{B}_\alpha\colon H^1(\R^3)\times H^1(\R^3)\to \R$ (associated to the operator $\mathcal{L}_\alpha$) is defined, for all $u,v\in H^1(\R^3)$, by
\begin{align*}
\mathcal{B}_\alpha(u,v):=&\int_{\R^3}\langle\nabla u,\nabla v\rangle \,\d x+\alpha\int_{\R^3}(-\Delta)^\frac{s}{2}u\,(-\Delta)^\frac{s}{2}v\,\d x\\
=&\int_{\R^3}\langle\nabla u,\nabla v\rangle \,\d x+\alpha\frac{C(s)}{2}\int_{\R^3}\int_{\R^3}\frac{(u(x)-u(y))(v(x)-v(y))}{|x-y|^{3+2s}}\,\d x\,\d y.
\end{align*}
Clearly $\mathcal{B}_\alpha$ is well defined and continuous on $H^1(\R^3)\times H^1(\R^3)$.
\end{definition}


\subsection{Preliminaries for the SM equations}
Regarding problem~\eqref{eq:problem}, the space of solutions for $u$ is $H^1(\R^3)$.
We recall that the embedding $H^1(\R^3)\hookrightarrow L^p(\R^3)$ is continuous for all $p\in[2,6]$, being $6=2^*$ the critical Sobolev exponent in dimension $n=3$.
In particular, for any $p\in[2,6]$, there exists a constant $C_p>0$ such that
\begin{equation}\label{eq:Sobolev}
\|u\|_p\le C_p\|u\|_{H^1}\quad\text{for all $u\in H^1(\R^3)$}.
\end{equation}

Instead, the space of solutions for $\Phi$ is the Hilbert space $\mathcal{D}^{1,2}(\R^3)$ introduced in~\eqref{space_sol_Phi}, and since in the whole space $\R^3$ the Poincar\'e inequality does not hold, we get
\begin{equation*}
    H^1(\R^3)\subsetneq\mathcal{D}^{1,2}(\R^3).
\end{equation*}
In any case, $\mathcal{D}^{1,2}(\R^3)$ is continuously embedded into $L^6(\R^3)$, i.e., there exists a constant $C_D>0$ such that
\begin{equation}\label{eq:D}
    \|\Phi\|_6\le C_D\|\nabla\Phi\|_2\quad\text{for all $\Phi\in\mathcal{D}^{1,2}(\R^3)$}.
\end{equation}
We can now introduce the definition of weak solutions of~\eqref{eq:problem}.
\begin{definition}\label{eq:DefWeakSol}
A pair $(u,\Phi)\in H^1(\R^3)\times\mathcal{D}^{1,2}(\R^3)$ is called a weak solution of~\eqref{eq:problem} if
\begin{equation}\label{eq:u}
    \frac{\hbar^2}{2m}\mathcal{B}_{\alpha}(u,v) + \int_{\R^3}(\omega+\Phi)uv \,\d x= \int_{\R^3}|u|^{p-2}uv\,\d x\quad\text{for all $v\in H^1(\R^3)$}
\end{equation}
and
\begin{equation}\label{eq:Phi}
    \int_{\mathbb R^3}\langle\nabla\Phi,\nabla\phi\rangle \,\d x = 2\pi\int_{\R^3}\phi u^2\,\d x\quad\text{for all $\phi\in\mathcal{D}^{1,2}(\R^3)$}.
\end{equation}
\end{definition}
To show that Definition~\ref{eq:DefWeakSol} makes sense we state and prove the following result.
\begin{lemma}\label{lem:makesense}
The system is coherent, whether $u,v\in H^1(\R^3)$ and $\Phi,\phi\in\mathcal{D}^{1,2}(\R^3)$.
\end{lemma}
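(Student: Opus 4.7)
The plan is to check that each term appearing in equations \eqref{eq:u} and \eqref{eq:Phi} yields a finite integral when $u,v\in H^1(\R^3)$ and $\Phi,\phi\in\mathcal{D}^{1,2}(\R^3)$. The bilinear form $\mathcal{B}_\alpha$ is already known to be well defined and continuous on $H^1(\R^3)\times H^1(\R^3)$ by the final sentence of Definition~\ref{def:Balpha}, while the Dirichlet pairing $\int_{\R^3}\langle\nabla\Phi,\nabla\phi\rangle\,\d x$ is automatically well defined (and continuous) on $\mathcal{D}^{1,2}(\R^3)\times\mathcal{D}^{1,2}(\R^3)$ by the very definition of that space. Everything else is handled by combining H\"older's inequality with the continuous embeddings $H^1(\R^3)\hookrightarrow L^p(\R^3)$ for $p\in[2,6]$ recorded in~\eqref{eq:Sobolev}, and $\mathcal{D}^{1,2}(\R^3)\hookrightarrow L^6(\R^3)$ recorded in~\eqref{eq:D}.

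First I would treat equation~\eqref{eq:u}. The mass term $\omega\int_{\R^3}uv\,\d x$ is trivially bounded by $\omega\|u\|_2\|v\|_2$. For the coupling term $\int_{\R^3}\Phi uv\,\d x$, I apply H\"older's inequality with the triple of exponents $6,\tfrac{12}{5},\tfrac{12}{5}$ (which sum reciprocally to $1$), obtaining the bound $\|\Phi\|_6\|u\|_{12/5}\|v\|_{12/5}$; each factor is finite since $\Phi\in L^6(\R^3)$ via~\eqref{eq:D} and $u,v\in L^{12/5}(\R^3)$ via~\eqref{eq:Sobolev}, because $\tfrac{12}{5}\in[2,6]$. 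For the nonlinear term, H\"older with conjugate exponents $\tfrac{p}{p-1}$ and $p$ gives the bound $\|u\|_p^{p-1}\|v\|_p$, which is finite because $p\in(2,6)$ lies in the admissible Sobolev range.

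For equation~\eqref{eq:Phi}, the left-hand side is bounded by $\|\nabla\Phi\|_2\|\nabla\phi\|_2$ as noted above. For the right-hand side, applying H\"older's inequality with exponents $6,\tfrac{12}{5},\tfrac{12}{5}$ again gives
\begin{equation*}
\left|\int_{\R^3}\phi u^2\,\d x\right|\le \|\phi\|_6\|u\|_{12/5}^2,
\end{equation*}
which is finite by the same two embeddings.

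No genuine obstacle appears in this argument: the statement is essentially a routine coherence check, and the only choice worth noting is the exponent $\tfrac{12}{5}$ that naturally pairs two $H^1$-functions against one $L^6$-function in H\"older's inequality and still falls inside the admissible Sobolev range $[2,6]$ for $\R^3$.
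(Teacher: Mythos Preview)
Your proposal is correct and follows essentially the same approach as the paper: both arguments verify finiteness term-by-term using the continuity of $\mathcal{B}_\alpha$, the Cauchy--Schwarz bound on the Dirichlet pairing, and H\"older's inequality with the same exponent choices (in particular the triple $6,\tfrac{12}{5},\tfrac{12}{5}$ for the coupling terms and $\tfrac{p}{p-1},p$ for the nonlinearity).
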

\begin{proof}
Let us show that all the terms in~\eqref{eq:u} and~\eqref{eq:Phi} are well defined for $u,v\in H^1(\R^3)$ and $\Phi,\phi\in \mathcal{D}^{1,2}(\R^3)$. As observed before, the bilinear form $\mathcal{B}_\alpha$ is well defined and continuous on $H^1(\R^3)\times H^1(\R^3)$. Moreover, by H\"older inequality, we have
\begin{align*}
\left|\int_{\R^3}(\omega+\Phi)uv \,\d x\right|&\le \omega\|u\|_2\|v\|_2+\|\Phi\|_6\|u\|_\frac{12}{5}\|v\|_\frac{12}{5}<\infty,\\
\intertext{and}
\left|\int_{\R^3}|u|^{p-2}uv\,\d x\right|&\le \|u\|_p^{p-1}\|v\|_p<\infty
\end{align*}
for every $u,v\in H^1(\R^3)$ and $\Phi\in\mathcal{D}^{1,2}(\R^3)$. On the other hand
\begin{align*}
\left|\int_{\mathbb R^3}\langle\nabla\Phi,\nabla\phi\rangle \,\d x\right|&\le \|\nabla\Phi\|_2\|\nabla\phi\|_2<\infty,\\
\left|\int_{\R^3}\phi u^2\,\d x\right|&\le\|\phi\|_6\|u\|_{\frac{12}{5}}^2<\infty
\end{align*}
for every $u\in H^1(\R^3)$ and $\Phi,\phi\in\mathcal{D}^{1,2}(\R^3)$.
\end{proof}
It is easy to see that a regular solution of~\eqref{eq:problem} is actually a weak solution, according to Definition~\ref{eq:DefWeakSol}.
As usual, a weak solution of~\eqref{eq:problem} can be found by studying the critical points of the functional $F\colon H^1(\R^3)\times \mathcal{D}^{1,2}(\R^3)\to \R$, defined for any $(u,\Phi)\in H^1(\R^3)\times\mathcal{D}^{1,2}(\R^3)$ by
\[
F(u,\Phi):=\frac{\hbar^2}{4m}\mathcal{B}_\alpha(u,u)-\frac{1}{8\pi}\int_{\R^3}|\nabla\Phi|^2\,\d x +\frac{1}{2}\int_{\R^3}(\omega+\Phi)u^2\,\d x -\frac{1}{p}\int_{\R^3}|u|^p\,\d x.
\]
We remark that the functional $F$ is Fr\'{e}chet differentiable on $H^1(\R^3)\times\mathcal{D}^{1,2}(\R^3)$ and, for all $u,v\in H^1(\R^3)$ and $\Phi,\phi\in\mathcal{D}^{1,2}(\R^3)$, we have
\begin{align*}
F'_u(u,\Phi)[v] &=\frac{\hbar^2}{2m}\mathcal{B}_\alpha(u,v)+\int_{\R^3}(\omega+\Phi)uv \,\d x-\int_{\R^3}|u|^{p-2}uv \,\d x,\\
\intertext{and}
F'_\Phi (u,\Phi)[\phi] &=-\frac{1}{4\pi}\int_{\R^3}\langle\nabla\Phi,\nabla\phi\rangle \,\d x+\frac{1}{2}\int_{\R^3}\phi u^2\,\d x.
\end{align*}

Unfortunately, even though it seems to be natural to work with the functional $F$, we are unable to endow the Hilbert space $H^1(\R^3)\times\mathcal{D}^{1,2}(\R^3)$ with a norm suitable to apply the theory of critical points to $F$.
Therefore, we look for another variational characterization of problem~\eqref{eq:problem}.
\begin{lemma}\label{lem:Phiu}
For every $u\in H^1(\R^3)$ there exists a unique $\Phi(u)\in \mathcal{D}^{1,2}(\R^3)$, which is a solution of~\eqref{eq:Phi}. Moreover, we have
\begin{itemize}
    \item [(i)] $\Phi(u)\ge 0$ in $\R^3$ for all $u\in H^1(\R^3)$;
    \item [(ii)] $\Phi(tu)=t^2\Phi(u)$ for all $u\in H^1(\R^3)$ and $t\in\R$.
    \item [(iii)] If $u$ is radially symmetric, then $\Phi(u)$ is radially symmetric.
\end{itemize}
\end{lemma}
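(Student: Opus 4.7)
The plan is to view equation~\eqref{eq:Phi} as a Riesz representation problem in the Hilbert space $\mathcal{D}^{1,2}(\R^3)$, whose scalar product is
\[
(\Phi,\phi)_{\mathcal{D}^{1,2}}:=\int_{\R^3}\langle\nabla\Phi,\nabla\phi\rangle\,\d x.
\]
Fix $u\in H^1(\R^3)$ and define $T_u\colon\mathcal{D}^{1,2}(\R^3)\to\R$ by $T_u(\phi):=2\pi\int_{\R^3}\phi u^2\,\d x$. Linearity is obvious, and using H\"older's inequality with exponents $6$ and $6/5$, together with the embedding~\eqref{eq:D} and $H^1(\R^3)\hookrightarrow L^{12/5}(\R^3)$, I would bound
\[
|T_u(\phi)|\le 2\pi\|\phi\|_6\|u\|_{12/5}^2\le 2\pi\, C_D\,\|u\|_{12/5}^2\,\|\nabla\phi\|_2,
\]
so $T_u$ is continuous on $\mathcal{D}^{1,2}(\R^3)$. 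The Riesz representation theorem then yields a unique $\Phi(u)\in\mathcal{D}^{1,2}(\R^3)$ with $(\Phi(u),\phi)_{\mathcal{D}^{1,2}}=T_u(\phi)$ for every $\phi\in\mathcal{D}^{1,2}(\R^3)$, which is exactly~\eqref{eq:Phi}.

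For property (i), I would test~\eqref{eq:Phi} against $\phi=\Phi(u)^-:=\max\{-\Phi(u),0\}$, which lies in $\mathcal{D}^{1,2}(\R^3)$ with $\nabla\Phi(u)^-=-\nabla\Phi(u)\,\chi_{\{\Phi(u)<0\}}$ (a standard truncation fact, obtained by approximation with $C_c^\infty$ functions). Then
\[
-\int_{\R^3}|\nabla\Phi(u)^-|^2\,\d x=\int_{\R^3}\langle\nabla\Phi(u),\nabla\Phi(u)^-\rangle\,\d x=2\pi\int_{\R^3}\Phi(u)^-u^2\,\d x\ge 0,
\]
so $\|\nabla\Phi(u)^-\|_2=0$; since $\Phi(u)^-\in\mathcal{D}^{1,2}(\R^3)$ this forces $\Phi(u)^-=0$, i.e.\ $\Phi(u)\ge 0$. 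Property (ii) is immediate: for $t\in\R$, the function $t^2\Phi(u)$ satisfies $(t^2\Phi(u),\phi)_{\mathcal{D}^{1,2}}=2\pi t^2\int u^2\phi\,\d x=2\pi\int(tu)^2\phi\,\d x$, and uniqueness gives $\Phi(tu)=t^2\Phi(u)$.

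For (iii), given any rotation $R\in O(3)$ I would consider $\Phi(u)\circ R\in\mathcal{D}^{1,2}(\R^3)$ (the space is invariant under orthogonal transformations, as seen by approximation with $C_c^\infty$). A change of variables in~\eqref{eq:Phi}, applied with test function $\phi\circ R^{-1}$ and using $u\circ R=u$, shows that $\Phi(u)\circ R$ satisfies the same Riesz identity as $\Phi(u)$. By uniqueness $\Phi(u)\circ R=\Phi(u)$ for every $R$, hence $\Phi(u)$ is radial. The only mildly delicate point in the whole proof is the justification that $\Phi(u)^-\in\mathcal{D}^{1,2}(\R^3)$ with the expected distributional gradient, but this is routine truncation in the Beppo Levi space $\mathcal{D}^{1,2}(\R^3)$.
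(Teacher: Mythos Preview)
Your proof is correct and follows essentially the same route as the paper. For existence, uniqueness, and properties (i)--(ii) the paper simply cites~\cite[Proposition~3.1]{Dap1}, whereas you spell out the standard argument (Riesz representation in $\mathcal{D}^{1,2}(\R^3)$ and testing with the negative part); for (iii) your change-of-variables-plus-uniqueness argument is exactly the one the paper gives.
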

\begin{proof}
The proof of the existence and uniqueness of $\Phi(u)$ and of (i) and (ii) can be found in~\cite[Proposition~3.1]{Dap1}.

\noindent It remains to prove (iii).
We recall that a function $u\colon \R^3\to \R$ is radially symmetric, that is $u(x)=v(|x|)$ for some function $v\colon [0,\infty)\to \R$, if and only if
\[
u(g(x))=u(x)\quad\text{for all }g\in O(3)\text{ and }x\in\R^3,
\]
where $g(x):=Ox$, with $O$ orthogonal matrix.

We then fix $u\in H^1(\R^3)$ radially symmetric and $g\in O(3)$.
By the chain rule, the change of variables formula, and by~\eqref{eq:Phi}, we have
\begin{align*}
    \int_{\mathbb R^3}\langle\nabla (\Phi(u)\circ g),\nabla\phi\rangle \,\d x&=\int_{\mathbb R^3}\langle g(\nabla \Phi(u)\circ g),\nabla\phi\rangle \,\d x\\
    &=\int_{\mathbb R^3}\langle\nabla \Phi (u),\nabla (\phi\circ g^{-1})\rangle \,\d y= 2\pi\int_{\R^3}(\phi\circ g^{-1}) u^2\,\d y\\
    &=2\pi\int_{\R^3}\phi (u^2\circ g)\,\d x=2\pi\int_{\R^3}\phi u^2\,\d x
\end{align*}
for all $\phi\in \mathcal{D}^{1,2}(\R^3)$, since $\phi\circ g\in \mathcal{D}^{1,2}(\R^3)$ and $u$ is radially symmetric.

Therefore, by the uniqueness of solutions of~\eqref{eq:Phi}, we get that
\[
\Phi(u)\circ g=\Phi(u)\quad\text{for all }g\in O(3),
\]
that is, $\Phi(u)$ is radially symmetric.
\end{proof}
\begin{remark}
The unique solution $\Phi(u)$ of~\eqref{eq:Phi} can be also directly computed by convolution with the Newton potential
$$K(x):=\frac{1}{4\pi|x|}\quad\text{for all $x\in\R^3$},$$
and it has the form
\begin{align*}
\Phi(u)(x)=(K*(2\pi u^2))(x)=\frac{1}{2}\int_{\R^3}\frac{u^2(y)}{|x-y|}\,\d y\quad\text{for all $x\in\R^3$}.
\end{align*}
\end{remark}
As a consequence of Lemma~\ref{lem:Phiu}, we can deduce some useful estimates for the solutions of~\eqref{eq:Phi}.
Fix $u\in H^1(\R^3)$ and let $\Phi_u:=\Phi(u)\in\mathcal{D}^{1,2}(\R^3)$ be the unique solution of~\eqref{eq:Phi}.
Then,
\[
F_\Phi'(u,\Phi_u)[\phi]=0\quad\text{for every }\phi\in\mathcal{D}^{1,2}(\R^3)
\]
and, for $\phi=\Phi_u$, we get
\begin{equation}\label{eq:idvarphiu}
    \int_{\mathbb R^3}|\nabla\Phi_u|^2\,\d x=2\pi\int_{\R^3}\Phi_u u^2\,\d x.
\end{equation}
In particular, as a consequence of~\eqref{eq:D},~\eqref{eq:idvarphiu} and  Hölder inequality, we get
\begin{equation*}
    \|\nabla \Phi_u\|_2^2\le 2\pi\|\Phi_u\|_6\|u\|_{\frac{12}{5}}^2\le 2\pi C_D\|\nabla \Phi_u\|_2\|u\|_{\frac{12}{5}}^2,
\end{equation*}
which gives
\begin{equation}\label{eq:key2}
    \|\nabla \Phi_u\|_2\le 2\pi C_D\|u\|_{\frac{12}{5}}^2.
\end{equation}
Finally, by Lemma~\ref{lem:Phiu}--(i),~\eqref{eq:idvarphiu}, and~\eqref{eq:key2}, we have
\begin{equation}\label{eq:key}
    0\le \int_{\R^3}\Phi_u u^2\,\d x\le 2\pi C_D^2\|u\|_{\frac{12}{5}}^4.
\end{equation}

This allows us to introduce the following functional, as done in~\cite{Dap1}.
\begin{definition}\label{Definizione 2.7}
Fix any function $u\in H^1(\R^3)$, let $\Phi_u\in\mathcal{D}^{1,2}(\R^3)$ be the unique solution of~\eqref{eq:Phi}. We define the functional $J\colon H^1(\R^3)\to\R$ by
\begin{equation}\label{eq:J}
    J(u):=\dfrac{\hbar^2}{4m}\mathcal{B}_\alpha(u,u) +\dfrac{\omega}{2}\int_{\R^3}u^2\,\d x+\dfrac{1}{4}\int_{\R^3}\Phi_u u^2\,\d x-\frac{1}{p}\int_{\R^3}|u|^p \,\d x.
\end{equation}
\end{definition}
By the identity~\eqref{eq:idvarphiu}, we have
$$J(u)=F(u,\Phi_u).$$
Moreover, by standard arguments, the map $u\mapsto\Phi_u$ from $H^1(\R^3)$ into $\mathcal{D}^{1,2}(\R^3)$ is of class ${\rm\bf C}^1$.
Hence, the functional $J$ is Fr\'{e}chet differentiable on $H^1(\R^3)$ and
\begin{equation*}
J'(u)[v] =F'_u(u,\Phi_u)[v]\quad\text{for all $u,v\in H^1(\R^3)$},
\end{equation*}
since $F_\Phi'(u,\Phi_u)[\Phi_u'[v]]=0$, that is
\begin{equation}\label{gradiente}
J'(u)[v]=\dfrac{\hbar^2}{2m}\mathcal{B}_\alpha(u,v)+\omega\int_{\R^3}uv \,\d x+\int_{\R^3}\Phi_u uv \,\d x-\int_{\R^3}|u|^{p-2}uv \,\d x
\end{equation}
for any $u,v\in H^1(\R^3)$. Therefore, a pair $(u,\Phi)\in H^1(\R^3)\times\mathcal{D}^{1,2}(\R^3)$ is a weak solution of problem~\eqref{eq:problem} if and only if $\Phi=\Phi_u$ and $u$ is a critical points of $J$.

Hence, in order to find a solution of problem~\eqref{eq:problem} it is enough to find a critical point of $J$ on $H^1(\R^3)$. This is done be applying an equivariant version of the Mountain Pass Theorem (see Theorem~\ref{Rabinowitz86} below), in the case $4<p<6$, and the original Mountain Pass Theorem (see Theorem~\ref{thm:AR} below), in the case $p=4$.

\noindent In what follows, $X$ denotes an infinite dimensional Banach space and $f:X\to\mathbb{R}$.

The following notion of compactness will be required in both cases.
\begin{definition}
Let $f\in{\rm\bf C}^1(X)$.
We say that $f$ satisfies the
Palais–Smale condition, $(PS)$ condition in short, if any sequence $(u_n)_n\subset X$ such that
\begin{itemize}
    \item $(f(u_n))_n\subset\R$ is bounded,
    \item $f'(u_n)\to 0$ in $X'$ as $n\to \infty$,
\end{itemize}
has a convergent subsequence.
\end{definition}
\begin{theorem}[{\cite[Theorem 9.12]{Rabinowitz86}}]\label{Rabinowitz86}
Let $f\in {\rm\bf C}^1(X)$ be an even functional and such that $f(0) = 0$. Assume that $X$ is decomposable as direct sum of two closed subspaces $X = X_1 \oplus X_2$, with $\dim X_1 < \infty$. Suppose that:
\begin{itemize}
\item [$(i)$] there exist $\delta,\varrho>0$ such that
\begin{equation*}
\inf f(S_\varrho\cap X_2)\geq\delta,
\end{equation*}
where $S_{\varrho}:=\{u\in X\,:\,\|u\|_X=\varrho\}$;
    \item[$(ii)$] for any finite dimensional subspace $Y\subset X$ there exists $R=R(Y)>0$ such that
    \begin{equation*}
    f(u)\leq 0
    \end{equation*}
    for any $u\in Y$ with $\|u\|_X\geq R$;
    \item[$(iii)$] $f$ satisfies the $(PS)$ condition.
  \end{itemize}
  Then $f$ has an unbounded sequence of positive critical values.
\end{theorem}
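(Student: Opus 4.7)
The plan is to invoke Lyusternik--Schnirelman minimax theory through the Krasnoselskii genus to produce an unbounded sequence of positive critical values. For each integer $k\ge 1$, I would set
\[
\mathcal{A}_k:=\{A\subset X\setminus\{0\}\,:\,A\text{ compact, symmetric, }\gamma(A)\ge k\},
\]
where $\gamma$ denotes the Krasnoselskii genus, and define the minimax levels
\[
c_k:=\inf_{A\in\mathcal{A}_k}\sup_{u\in A}f(u).
\]
Finiteness of $c_k$ is immediate from $(ii)$: for any $k$-dimensional subspace $Y\subset X$ and $R=R(Y)$ furnished by that hypothesis, the sphere $\partial B_R\cap Y$ is a compact symmetric set of genus $k$ on which $f$ is bounded (in fact nonpositive on its outer part), so $\mathcal{A}_k\neq\emptyset$ and $c_k<\infty$.

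Next I would establish the lower bound $c_k\ge\delta$ for $k\ge\dim X_1+1$. A standard property of the Krasnoselskii genus asserts that any compact symmetric $A\subset X\setminus\{0\}$ with $\gamma(A)>\dim X_1$ necessarily intersects $X_2$; this is a Borsuk--Ulam-type fact, obtained by applying the projection $P\colon X\to X_1$ and using monotonicity and sub-additivity of $\gamma$. A routine odd radial deformation moving $X_2\setminus\{0\}$ onto $S_\varrho\cap X_2$, applied within a slightly enlarged minimax class, produces a representative of $\mathcal{A}_k$ meeting $S_\varrho\cap X_2$ without increasing $\sup_A f$; hypothesis $(i)$ then gives $\sup_A f\ge\delta$, whence $c_k\ge\delta>0$.

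The remaining arguments are classical. The equivariant deformation lemma — available because $f$ is even and satisfies (PS), so that an odd pseudogradient vector field can be constructed — implies that each $c_k$ with $k\ge\dim X_1+1$ is a critical value of $f$: otherwise, deforming a near-optimal $A\in\mathcal{A}_k$ downward through $c_k$ by an odd homeomorphism would contradict the infimum definition. For unboundedness I would apply the standard multiplicity refinement: if $c=c_k=c_{k+1}=\cdots=c_{k+j}$ for some $j\ge 1$, a genus computation on a symmetric neighborhood of the compact critical set $K_c=\{u\in X:f(u)=c,\,f'(u)=0\}$ forces $\gamma(K_c)\ge j+1$; since (PS) makes $K_c$ compact and hence of finite genus, this cannot persist for arbitrarily large $j$. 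Together with the monotonicity $c_k\le c_{k+1}$, this forces $c_k\to\infty$.

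The principal obstacle is the linking step in the second paragraph: translating the abstract genus inequality $\gamma(A)>\dim X_1$ into the concrete topological fact that the minimax class $\mathcal{A}_k$ can be ``forced through'' the mountain $S_\varrho\cap X_2$. This is where the finite-dimensional hypothesis $\dim X_1<\infty$ is essential, via the Borsuk--Ulam character of $\gamma$; the rest of the argument is a by-now-standard application of the odd deformation machinery together with Lyusternik--Schnirelman multiplicity theory.
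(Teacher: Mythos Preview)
The paper does not prove this theorem at all: it is quoted verbatim from Rabinowitz's monograph and used as a black box. So there is no ``paper's own proof'' to compare with.

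That said, your sketch contains a genuine gap. With the minimax class you choose,
\[
\mathcal{A}_k=\{A\subset X\setminus\{0\}:A\text{ compact, symmetric, }\gamma(A)\ge k\},
\qquad
c_k=\inf_{A\in\mathcal{A}_k}\sup_A f,
\]
hypothesis $(ii)$ immediately forces $c_k\le 0$ for every $k$: take any $k$-dimensional subspace $Y$, let $R=R(Y)$, and set $A=\partial B_R\cap Y$. This $A$ is compact, symmetric, has genus $k$, and $f\le 0$ on it, so $\sup_A f\le 0$. This flatly contradicts your claim that $c_k\ge\delta>0$ for $k>\dim X_1$. The intersection fact you quote---that $\gamma(A)>\dim X_1$ forces $A\cap X_2\neq\emptyset$---is correct, but it does not give $A\cap(S_\varrho\cap X_2)\neq\emptyset$; your ``routine odd radial deformation'' would alter $A$ and in general change $\sup_A f$, so it cannot rescue the lower bound on the infimum.

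The naive genus class works only when $f$ is bounded below (classical Lyusternik--Schnirelman on manifolds, or coercive even functionals). Here $f$ is unbounded below on every finite-dimensional subspace, and Rabinowitz's actual proof uses a much more constrained minimax class: roughly, images under odd homeomorphisms (equal to the identity outside a large ball) of the closed disks $\overline{B_{R_j}}\cap E_j$ in a fixed increasing sequence $E_j\supset X_1$ of finite-dimensional subspaces, possibly with a set of small genus removed. The crucial linking lemma then shows that every such set meets $S_\varrho\cap X_2$, which is what produces the lower bound $c_j\ge\delta$; the upper bound and finiteness come from $(ii)$. Your deformation and multiplicity paragraphs are fine in spirit, but they need to be run with this refined class, not with $\mathcal{A}_k$.
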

\begin{theorem}[{\cite[Theorem 2.1]{AR}}]\label{thm:AR}
Let $f\in {\rm\bf C}^1(X)$ be such that $f(0) = 0$.
Assume that:
\begin{itemize}
    \item [$(i)$] there exist $\delta,\varrho>0$ such that
    \begin{equation*}
        \inf f(S_\rho)\geq\delta;
    \end{equation*}
    \item[$(ii)$] there exists $v\in X$ with $\|v\|_X>\delta$ such that
    \[
    f(v)\le 0;
    \]
    \item[$(iii)$] $f$ satisfies the $(PS)$ condition.
\end{itemize}
Then $f$ has a positive critical value.
\end{theorem}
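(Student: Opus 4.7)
The plan is to apply the classical minimax argument based on the quantitative deformation lemma. Define the family of admissible paths
\begin{equation*}
    \Gamma := \{\gamma\in {\rm\bf C}([0,1], X)\,:\,\gamma(0)=0,\ \gamma(1)=v\},
\end{equation*}
which is nonempty since $\gamma(t):=tv$ belongs to it. Set
\begin{equation*}
    c:=\inf_{\gamma\in\Gamma}\max_{t\in[0,1]}f(\gamma(t)).
\end{equation*}
I shall show that $c$ is well defined, strictly positive, and attained as a critical value of $f$.

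\textbf{Step 1: Mountain pass geometry.} First I would prove that $c\geq\delta>0$. For any $\gamma\in\Gamma$, the continuous function $t\mapsto\|\gamma(t)\|_X$ satisfies $\|\gamma(0)\|_X=0$ and $\|\gamma(1)\|_X=\|v\|_X>\varrho$ (interpreting hypothesis $(ii)$ as placing $v$ outside the ball of radius $\varrho$, as is customary in this form of the theorem). By the intermediate value theorem there exists $t_\gamma\in(0,1)$ with $\gamma(t_\gamma)\in S_\varrho$, hence $f(\gamma(t_\gamma))\geq\delta$ by $(i)$. Taking the maximum in $t$ and the infimum over $\gamma$ yields $c\geq\delta>0$. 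In particular $c$ is finite, since the straight segment $t\mapsto tv$ gives an upper bound.

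\textbf{Step 2: $c$ is a critical value.} Arguing by contradiction, suppose $c$ were a regular value of $f$. Then by $(iii)$ and the quantitative deformation lemma at level $c$, one obtains $\varepsilon\in(0,c/2)$ and a continuous map $\eta:[0,1]\times X\to X$ such that $\eta(1,\cdot)$ fixes every point $u$ satisfying $|f(u)-c|\geq 2\varepsilon$, while $f(\eta(1,u))\leq c-\varepsilon$ for all $u$ with $f(u)\leq c+\varepsilon$. Choose $\gamma\in\Gamma$ with $\max_t f(\gamma(t))\leq c+\varepsilon$. Since $f(0)=0$ and $f(v)\leq 0$ both lie below $c-2\varepsilon$, the endpoints of $\gamma$ are fixed by $\eta(1,\cdot)$, so $\tilde\gamma:=\eta(1,\gamma(\cdot))\in\Gamma$. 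But $\max_t f(\tilde\gamma(t))\leq c-\varepsilon$, contradicting the very definition of $c$. Hence $c$ is a positive critical value of $f$.

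\textbf{Main obstacle.} The technical heart of the argument is the deformation lemma itself. Its derivation requires the construction of a locally Lipschitz pseudo-gradient vector field $V$ on the set of regular points of $f$, satisfying $\|V(u)\|_X\leq 2\|f'(u)\|_{X'}$ and $f'(u)[V(u)]\geq \|f'(u)\|_{X'}^2$, and then the integration of the ODE $\dot\eta=-\chi(\eta)V(\eta)$ against a suitable Urysohn-type cutoff $\chi$ supported near the level $c$. The role of the $(PS)$ condition is precisely to guarantee, on a neighbourhood of the level set $\{f=c\}$, a uniform lower bound $\|f'(u)\|_{X'}\geq\mu>0$, which then allows the negative pseudo-gradient flow to push the sublevel set $\{f\leq c+\varepsilon\}$ strictly below $c$ in finite time. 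This pseudo-gradient construction and the ensuing flow estimate, rather than the minimax rearrangement carried out in Steps~1--2, constitute the main technical difficulty of the proof.
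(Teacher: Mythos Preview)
The paper does not prove this statement at all: Theorem~\ref{thm:AR} is simply quoted from Ambrosetti--Rabinowitz \cite[Theorem~2.1]{AR} and used as a black box in the proofs of Theorems~\ref{mainthm} and~\ref{mainthm2}. There is therefore no ``paper's own proof'' to compare your attempt against.

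That said, your sketch is the standard minimax/deformation argument and is essentially correct. One small point worth flagging: you correctly observe that the hypothesis $(ii)$ as stated in the paper reads $\|v\|_X>\delta$, whereas the argument (and the original Ambrosetti--Rabinowitz formulation) requires $\|v\|_X>\varrho$ so that the path from $0$ to $v$ must cross $S_\varrho$. Your parenthetical interpretation is the right fix; without it Step~1 would not go through.
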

\medskip

We notice that, also using Lemma~\ref{lem:Phiu}--(ii), the functional $J$ defined in~\eqref{eq:J} is even, $J\in {\rm\bf C}^1(H^1(\R^3))$ and $J(0)=0$.
In the next section we prove that a suitable restriction of $J$ satisfies the assumptions of Theorem~\ref{Rabinowitz86}, when $p\in(4,6)$, and of Theorem~\ref{thm:AR}, when $p=4$.

In order to prove the the geometric condition (ii) of Theorem~\ref{thm:AR} for $p=4$, it is convenient to introduce the following compact notation.
\begin{definition}\label{def:u-lambda}
Let $\lambda>1$ and $\beta,\gamma\in\R$ be fixed. For every $u\in L^2(\R^3)$ we define
\begin{align*}
u_{\lambda,\beta,\gamma}(x)&:=\lambda^\gamma u(\lambda^\beta x)\quad\text{for all $x\in\R^3$}.
\end{align*}
\end{definition}
By definition, $u_{\lambda,\beta,\gamma}\in L^2(\R^3)$ and, if $u\in H^1(\R^3)$, then also $u_{\lambda,\beta,\gamma}\in H^1(\R^3)$.
Moreover,
\begin{equation}\label{eq:lambda-Fourier}
    \mathscr{F} (u_{\lambda,\beta,\gamma})(\xi)=(\mathscr{F}u)_{\lambda,-\beta,\gamma-3\beta}(\xi)\quad\text{for all }u\in L^2(\R^3)\text{ and for }\xi\in\R^3.
\end{equation}
The identity~\eqref{eq:lambda-Fourier} can be proved for all $\varphi\in \mathcal{S}(\R^3)$, by using~\eqref{eq:Fourier} and the change of variable formula.
Then, it can be extended to all $u\in L^2(\R^3)$ by density.
\begin{lemma}\label{lem:stime-lambda}
Let $\lambda>1$ and $\beta,\gamma\in\R$ be fixed and, for any $u\in H^1(\R^3)$, let $u_{\lambda,\beta,\gamma}\in H^1(\R^3)$ be defined as above.
Then, for all $s\in(0,1)$, we have
\begin{align}
\|u_{\lambda,\beta,\gamma}\|_2^2&=\lambda^{2\gamma-3\beta}\|u\|_2^2,\label{eq:lam1}\\
\|\nabla (u_{\lambda,\beta,\gamma})\|_2^2&=\lambda^{2\gamma-\beta}\|\nabla u\|_2^2,\label{eq:lam2}\\
\|(-\Delta)^{\frac{s}{2}}u_{\lambda,\beta,\gamma}\|_2^2&=\lambda^{2\gamma+(2s-3)\beta}\|(-\Delta)^{\frac{s}{2}}u\|_2^2\label{eq:lam3}.
\end{align}
Moreover, if $\Phi_u$ and $\Phi_{u_{\lambda,\beta,\gamma}}$ are the unique solutions of~\eqref{eq:Phi}, respectively associated with $u$ and $u_{\lambda,\beta,\gamma}$, then
\begin{equation}
    \Phi_{u_{\lambda,\beta,\gamma}}=(\Phi_u)_{\lambda,\beta,2(\gamma-\beta)}.\label{eq:Phi-lambda}
\end{equation}
\end{lemma}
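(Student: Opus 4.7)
The first two identities are direct change-of-variable computations. For \eqref{eq:lam1}, I substitute $y=\lambda^\beta x$ so that $\mathrm{d}y=\lambda^{3\beta}\mathrm{d}x$ and read off the factor $\lambda^{2\gamma}\cdot\lambda^{-3\beta}$. For \eqref{eq:lam2} I first observe, by the chain rule, that $\nabla u_{\lambda,\beta,\gamma}(x)=\lambda^{\gamma+\beta}(\nabla u)(\lambda^\beta x)$, and then apply the same change of variables, which yields the factor $\lambda^{2(\gamma+\beta)-3\beta}=\lambda^{2\gamma-\beta}$. No obstacle here.

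The third identity \eqref{eq:lam3} is where I would use the Fourier-side formulation. By Plancherel and the definition of the fractional Laplacian via Fourier multipliers (already recalled in the paper),
\begin{equation*}
\|(-\Delta)^{s/2}u_{\lambda,\beta,\gamma}\|_2^2=\int_{\R^3}|\xi|^{2s}|\mathscr{F}u_{\lambda,\beta,\gamma}(\xi)|^2\,\mathrm{d}\xi.
\end{equation*}
Into this I substitute the scaling rule \eqref{eq:lambda-Fourier}, namely $\mathscr{F}u_{\lambda,\beta,\gamma}(\xi)=\lambda^{\gamma-3\beta}(\mathscr{F}u)(\lambda^{-\beta}\xi)$, and then change variable $\eta=\lambda^{-\beta}\xi$. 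Collecting the exponents $2(\gamma-3\beta)$ from the amplitude, $2s\beta$ from $|\xi|^{2s}=\lambda^{2s\beta}|\eta|^{2s}$, and $3\beta$ from the Jacobian, gives the total $2\gamma+(2s-3)\beta$, as claimed.

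For \eqref{eq:Phi-lambda} the plan is to verify that the proposed function $w:=(\Phi_u)_{\lambda,\beta,2(\gamma-\beta)}$ belongs to $\mathcal{D}^{1,2}(\R^3)$ and solves the Poisson equation \eqref{eq:Phi} with datum $u_{\lambda,\beta,\gamma}$; the identity then follows from the uniqueness part of Lemma~\ref{lem:Phiu}. Membership in $\mathcal{D}^{1,2}(\R^3)$ is immediate from \eqref{eq:lam2} applied with exponents $(\beta,2(\gamma-\beta))$, giving $\|\nabla w\|_2^2=\lambda^{4\gamma-5\beta}\|\nabla\Phi_u\|_2^2<\infty$. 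To check the equation, the quickest route is pointwise: $-\Delta w(x)=\lambda^{2(\gamma-\beta)+2\beta}\bigl(-\Delta\Phi_u\bigr)(\lambda^\beta x)=\lambda^{2\gamma}\cdot 2\pi u^2(\lambda^\beta x)=2\pi u_{\lambda,\beta,\gamma}^2(x)$. Testing against $\phi\in\mathcal{D}^{1,2}(\R^3)$ and using the change of variable $y=\lambda^\beta x$ produces the weak formulation \eqref{eq:Phi}, so $w=\Phi_{u_{\lambda,\beta,\gamma}}$ by uniqueness.

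The only mildly delicate point is the bookkeeping of exponents in \eqref{eq:lam3} and in the verification of \eqref{eq:Phi-lambda}; everything else is essentially a change of variable. In particular, the choice of the third index $2(\gamma-\beta)$ for $\Phi_u$ is exactly the one that makes the scaling of $-\Delta$ (which contributes $+2\beta$) match the scaling of $u^2$ (which contributes $2\gamma$), so the consistency of the exponents is both the computational heart of the proof and its most error-prone step.
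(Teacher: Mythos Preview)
Your argument is correct and matches the paper's proof: the first two identities by change of variables and the chain rule, \eqref{eq:lam3} via the Fourier scaling rule \eqref{eq:lambda-Fourier}, and \eqref{eq:Phi-lambda} by checking that the rescaled $\Phi_u$ solves \eqref{eq:Phi} for $u_{\lambda,\beta,\gamma}$ and invoking uniqueness from Lemma~\ref{lem:Phiu}. The only difference is cosmetic: the paper verifies \eqref{eq:Phi-lambda} entirely at the weak level (testing the equation for $\Phi_u$ against $\phi_{\lambda,-\beta,2\gamma-3\beta}$ and changing variables), whereas you first pass through a pointwise/distributional identity before testing --- both routes amount to the same computation.
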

\begin{proof}
The identities~\eqref{eq:lam1} and~\eqref{eq:lam2} are a consequence of the chain rule and the change of variables formula.

\noindent To derive~\eqref{eq:lam3}, we use~\eqref{eq:lambda-Fourier} and the change of variable formula, leading to
\begin{align*}
\|(-\Delta)^{\frac{s}{2}}u_{\lambda,\beta,\gamma}\|_2^2&=\int_{\R^3}|\xi|^{2s}|\mathscr{F}(u_{\lambda,\beta,\gamma})(\xi)|^2\,\d\xi=\lambda^{2\gamma-6\beta}\int_{\R^3}|\xi|^{2s}|\mathscr{F}u(\lambda^{-\beta}\xi)|^2\,\d\xi\\
&= \lambda^{2\gamma+(2s-3)\beta}\int_{\R^3}|\hat\xi|^{2s}|\mathscr{F}u(\hat\xi)|^2\,\d\hat\xi=\lambda^{2\gamma+(2s-3)\beta}\|(-\Delta)^{\frac{s}{2}}u\|_2^2.
\end{align*}
Finally, let us prove~\eqref{eq:Phi-lambda}.
Since problem~\eqref{eq:Phi} has a unique solution $\Phi_u$ for every $u\in H^1(\R^3)$ by Lemma~\ref{lem:Phiu}, it is enough to show that $(\Phi_u)_{\lambda,\beta,2(\gamma-\beta)}$ is the solution of~\eqref{eq:Phi} associated with $u_{\lambda,\beta,\gamma}$.

Notice that for all $\phi\in \mathcal{D}^{1,2}(\R^3)$ we have that $\phi_{\lambda,-\beta,2\gamma-3\beta}\in \mathcal{D}^{1,2}(\R^3)$.
Hence, by~\eqref{eq:Phi}, we derive that
\begin{align*}
    \int_{\mathbb R^3}\langle\nabla(\Phi_u)_{\lambda,\beta,2(\gamma-\beta)}(x),\nabla\phi(x)\rangle \,\d x&=\lambda^{2\gamma-\beta}\int_{\mathbb R^3}\langle\nabla \Phi_u(\lambda^\beta x),\nabla\phi(x)\rangle \,\d x\\
    &=\int_{\mathbb R^3}\langle\nabla \Phi_u(y),\nabla(\phi_{\lambda,-\beta,2\gamma-3\beta})(y)\rangle \,\d y\\
    &= 2\pi\int_{\R^3}\phi_{\lambda,-\beta,2\gamma-3\beta}(y) u^2(y)\,\d y\\
    &=2\pi\lambda^{2\gamma-3\beta}\int_{\R^3}\phi(\lambda^{-\beta}y)u^2(y)\,\d y\\
    &=2\pi\int_{\R^3}\phi(x) u_{\lambda,\beta,\gamma}^2(x)\,\d x,
\end{align*}
which gives~\eqref{eq:Phi-lambda}.
\end{proof}

\subsection{Preliminaries for the SM equations with coercive potentials}

As done in~\cite{CCMV23} for the Klein-Gordon-Maxwell equations with potential $V$, the space of solutions for $u$ is defined as
$$W:=\left\{u\in H^1(\R^3)\,:\,\int_{\R^3}(V -V_0)u^2\,\d x<\infty\right\},$$
endowed with the norm
$$\|u\|_W^2:=\|u\|_2^2+\|\nabla u\|_2^2+\int_{\R^3}(V-V_0)u^2\,\d x,$$
while the space of solutions for $\Phi$ is the Hilbert space $\mathcal{D}^{1,2}(\R^3)$ introduced in~\eqref{space_sol_Phi}.

We recall the following result for $W$, whose proof follows from~\cite[Lemma 2.3]{CCMV23} and~\cite[Lemma 4.1]{CCMV23}.

\begin{lemma}\label{lem:com}
Assume $(V_1)$--$(V_3)$.
Then $W$ is a Hilbert space with respect to $\|\cdot\|_W$ and the space ${\rm\bf C}_c^\infty(\R^3)\subset W$ is dense in $W$. Moreover, the embedding $W\hookrightarrow L^p(\R^3)$ is continuous for all $p\in[2,6]$ and compact for all $p\in[2,6)$.
\end{lemma}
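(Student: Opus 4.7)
The plan is to transport \emph{verbatim} the arguments of \cite[Lemma 2.3 and Lemma 4.1]{CCMV23}, originally used for the Klein--Gordon--Maxwell system, since the space $W$ and the hypotheses $(V_1)$--$(V_3)$ do not involve $\LL$: only the classical $H^1$ part and the weight $V-V_0$ enter the definition. I would split the argument into four blocks matching the four assertions.

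\textbf{Hilbert structure.} By $(V_2)$, $V-V_0\ge 0$, so the symmetric bilinear form
\begin{equation*}
\langle u,v\rangle_W:=\int_{\R^3}(uv+\langle\nabla u,\nabla v\rangle)\,\d x+\int_{\R^3}(V-V_0)uv\,\d x
\end{equation*}
is well defined on $W\times W$, positive definite, and induces $\|\cdot\|_W$. Completeness follows from $\|u\|_W\ge\|u\|_{H^1}$: any Cauchy sequence $(u_n)\subset W$ converges to some $u\in H^1(\R^3)$, and Fatou's lemma applied to $(V-V_0)|u_n|^2$, together with the Cauchy property of the weighted integral, gives $u\in W$ and $u_n\to u$ in $W$.

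\textbf{Density and continuous embedding.} Given $u\in W$, multiply first by a radial cutoff $\eta_R\in {\rm\bf C}_c^\infty(\R^3)$ with $\eta_R\equiv 1$ on $B_R$; since $(V-V_0)|u|^2\in L^1(\R^3)$, dominated convergence yields $\eta_R u\to u$ in $W$ as $R\to\infty$. For each fixed $R$, continuity of $V$ on $\overline{B_{R+1}}$ by $(V_1)$ implies $V$ is bounded there, so standard mollification estimates give $\rho_\varepsilon*(\eta_R u)\to\eta_R u$ in $W$ as $\varepsilon\to 0^+$. The continuous embedding $W\hookrightarrow L^p(\R^3)$ for $p\in[2,6]$ is then immediate from $\|u\|_{H^1}\le\|u\|_W$ and the Sobolev embedding of $H^1(\R^3)$.

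\textbf{Compact embedding.} This is the main obstacle and is the only place where $(V_3)$ is used. Take a bounded sequence $(u_n)\subset W$; passing to a subsequence, $u_n\rightharpoonup u$ in $W$ and, by Rellich--Kondrachov, $u_n\to u$ in $L^p_{\mathrm{loc}}(\R^3)$ for every $p\in[2,6)$. By interpolation between $L^2$ and $L^6$ (both uniformly bounded by Sobolev), it suffices to prove $u_n\to u$ in $L^2(\R^3)$. For any $M>V_0$, on $\{V>M\}$ one has the Chebyshev-type bound
\begin{equation*}
\int_{\{V>M\}}(u_n-u)^2\,\d x\le\frac{1}{M-V_0}\int_{\R^3}(V-V_0)(u_n-u)^2\,\d x\le\frac{C}{M-V_0},
\end{equation*}
which is uniformly small for $M$ large. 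On $\{V\le M\}\setminus B_R$, cover by balls $B_h(y_k)$ with finite overlap and apply H\"older with exponents $(3,3/2)$ followed by Sobolev to obtain
\begin{equation*}
\int_{B_h(y_k)\cap\{V\le M\}}(u_n-u)^2\,\d x\le C\|u_n-u\|_{L^6(B_h(y_k))}^2\,|B_h(y_k)\cap\{V\le M\}|^{2/3}.
\end{equation*}
Summing over $k$ and invoking $(V_3)$ together with the uniform $L^6$ bound forces this contribution to vanish as $R\to\infty$, uniformly in $n$. Finally, on $B_R\cap\{V\le M\}$ classical Rellich--Kondrachov completes the argument. The delicate technical step, and the heart of the proof, is precisely this quantitative ball-covering estimate, where the $L^6$--Sobolev bound and the measure-vanishing condition $(V_3)$ are combined to produce uniform tightness.
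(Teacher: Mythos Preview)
Your proposal is correct and matches the paper's own proof, which simply cites \cite[Lemma~2.3 and Lemma~4.1]{CCMV23} without further detail; you cite the same references and additionally unpack the arguments correctly. One minor point worth making explicit in the compactness step: when you sum the local estimates over the balls $B_h(y_k)$, the ``uniform $L^6$ bound'' alone does not control $\sum_k\|u_n-u\|_{L^6(B_h(y_k))}^2$; rather, you should first apply the local Sobolev inequality $\|w\|_{L^6(B_h(y_k))}^2\le C_h\|w\|_{H^1(B_h(y_k))}^2$ and then use the finite-overlap property to bound $\sum_k\|w\|_{H^1(B_h(y_k))}^2\le N\|w\|_{H^1(\R^3)}^2$, after which the supremum of the measures can be pulled out and $(V_3)$ applied.
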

Similarly to Definition~\ref{eq:DefWeakSol}, we also introduce the notion of weak solutions of problem~\eqref{eq:problem2}.
\begin{definition}\label{eq:DefWeakSol2}
A pair $(u,\Phi)\in W\times\mathcal{D}^{1,2}(\R^3)$ is called a weak solution of~\eqref{eq:problem2} if
\begin{equation*}
    \frac{\hbar^2}{2m}\mathcal{B}_{\alpha}(u,v) + \int_{\R^3}(V+\Phi)uv \,\d x= \int_{\R^3}|u|^{p-2}uv\,\d x\quad\text{for all $v\in W$}
\end{equation*}
and
\begin{equation*}
    \int_{\mathbb R^3}\langle\nabla\Phi,\nabla\phi\rangle \,\d x = 2\pi\int_{\R^3}\phi u^2\,\d x\quad\text{for all $\phi\in\mathcal{D}^{1,2}(\R^3)$}.
\end{equation*}
\end{definition}
By the same arguments already used in  Lemma~\ref{lem:makesense},  Definition~\ref{eq:DefWeakSol2} makes sense, and every regular solution of~\eqref{eq:problem2} is actually a weak solution, according to Definition~\ref{eq:DefWeakSol2}.
As done before for the SM equations, we look for weak solutions of~\eqref{eq:problem2} as critical points of the functional $\mathcal{F}\colon W\times \mathcal{D}^{1,2}(\R^3)\to \R$ defined by
\[
\mathcal{F}(u,\Phi):=\frac{\hbar^2}{4m}\mathcal{B}_\alpha(u,u)-\frac{1}{8\pi}\int_{\R^3}|\nabla\Phi|^2\,\d x +\frac{1}{2}\int_{\R^3}(V+\Phi)u^2\,\d x -\frac{1}{p}\int_{\R^3}|u|^p\,\d x.
\]
The functional $\mathcal{F}$ is Fr\'{e}chet differentiable on $W\times\mathcal{D}^{1,2}(\R^3)$ and for all $u,v\in W$ and $\Phi,\phi\in\mathcal{D}^{1,2}(\R^3)$ we have
\begin{align*}
\mathcal{F}'_u(u,\Phi)[v] &=\frac{\hbar^2}{2m}\mathcal{B}_\alpha(u,v)+\int_{\R^3}(V+\Phi)uv \,\d x-\int_{\R^3}|u|^{p-2}uv \,\d x,\\
\mathcal{F}'_\Phi(u,\Phi)[\phi] &=-\frac{1}{4\pi}\int_{\R^3}\langle\nabla\Phi,\nabla\phi\rangle \,\d x+\frac{1}{2}\int_{\R^3}\phi u^2\,\d x.
\end{align*}
We are again unable to endow the Hilbert space $W\times\mathcal{D}^{1,2}(\R^3)$ with a norm suitable to apply the theory of critical points to $\mathcal{F}$.
We then fix $u\in W\subset H^1(\R^3)$ and consider the unique solution $\Phi_u\in\mathcal{D}^{1,2}(\R^3)$ of~\eqref{eq:Phi}, given by Lemma~\ref{lem:Phiu}.
Then,
\[
\mathcal{F}_\Phi'(u,\Phi_u)[\phi]=0
\]
for every $\phi\in\mathcal{D}^{1,2}(\R^3)$, and for $\phi=\Phi_u$ we get~\eqref{eq:idvarphiu}.
Therefore, we can introduce the following functional.

\begin{definition}
Fix any function $u\in W$, let $\Phi_u\in\mathcal{D}^{1,2}(\R^3)$ be the unique solution of~\eqref{eq:Phi}. We define the functional $\mathcal{J}\colon W\to\R$ by
\begin{equation}\label{eq:J2}
\mathcal{J}(u):=\dfrac{\hbar^2}{4m}\mathcal{B}_\alpha(u,u) +\dfrac{1}{2}\int_{\R^3}Vu^2\,\d x+\dfrac{1}{4}\int_{\R^3}\Phi_u u^2\,\d x-\frac{1}{p}\int_{\R^3}|u|^p \,\d x.
\end{equation}
\end{definition}
By the identity~\eqref{eq:idvarphiu}, we have
$$\mathcal{J}(u)=\mathcal{F}(u,\Phi_u),$$
and, by standard arguments, the map $u\mapsto\Phi_u$ from $W$ into $\mathcal{D}^{1,2}(\R^3)$ is of class ${\rm\bf C}^1$. Hence, the functional $\mathcal{J}$ is Fr\'{e}chet differentiable on $W$ and
\begin{equation*}
\mathcal{J}'(u)[v] =\mathcal{F}_u'(u,\Phi_u)[v]\quad\text{for all $u,v\in W$},
\end{equation*}
since $\mathcal{F}_\Phi'(u,\Phi_u)[\Phi_u'[v]]=0$, that is, for any $u,v\in W$
\begin{align*}
\mathcal{J}'(u)[v]&=\dfrac{\hbar^2}{2m}\mathcal{B}_\alpha(u,v)+\int_{\R^3}Vuv \,\d x+\int_{\R^3}\Phi_u uv \,\d x-\int_{\R^3}|u|^{p-2}uv \,\d x.
\end{align*}
Therefore, a pair $(u,\Phi)\in W\times\mathcal{D}^{1,2}(\R^3)$ is a weak solution of problem~\eqref{eq:problem2} if and only if $\Phi=\Phi_u$ and $u$ is a critical points of $\mathcal{J}$. Thus, to find a solution of problem~\eqref{eq:problem2}, we search critical points of $\mathcal{J}$ on $W$, and this is done be applying Theorem~\ref{Rabinowitz86}, in the case $4<p<6$, and Theorem~\ref{thm:AR}, in the case $p=4$.
Indeed, by Lemma~\ref{lem:Phiu}--(ii), the functional $\mathcal{J}\colon W\to\R$ defined in~\eqref{eq:J2} is even, and it satisfies $\mathcal{J}\in {\rm\bf C}^1(W)$ and $\mathcal{J}(0)=0$.


\section{The SM equations}
\label{Sect3}
To find critical points of the functional $J$ defined in~\eqref{eq:J}, we shall restrict it to the subspace of radial functions
\[H_r^1(\R^3):=\{u\in H^1(\R^3)\,:\,u(x)=v(|x|)\text{ for any }x\in\R^3\}.\]
This (standard) procedure is allowed by the following result.
\begin{lemma}\label{Lemmaradial}
Under the assumptions of Theorem~\ref{mainthm}, $u\in H^1_r(\mathbb{R}^3)$ is a critical point of $J|_{H^1_r(\mathbb{R}^3)}$ if and only if $u$ is a critical point of $J$.
\end{lemma}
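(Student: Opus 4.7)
The plan is to recognize this as a symmetric criticality result \`a la Palais and prove it by averaging over the orthogonal group $O(3)$. First I would introduce the isometric linear representation of $O(3)$ on $H^1(\R^3)$ given by
$$(T_g u)(x) := u(g^{-1} x) \quad \text{for } g\in O(3) \text{ and } x\in\R^3,$$
and observe that its fixed-point set is precisely $H^1_r(\R^3)$, according to the characterization of radial functions already recalled in the proof of Lemma~\ref{lem:Phiu}--(iii). That each $T_g$ is an isometry of $H^1(\R^3)$ follows from the change of variables formula together with $|\det g|=1$.

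The crucial step is to check that $J$ is $O(3)$-invariant, i.e.\ $J(T_g u) = J(u)$ for every $u\in H^1(\R^3)$ and $g\in O(3)$. The terms $\|u\|_2^2$, $\|u\|_p^p$ and $\|\nabla u\|_2^2$ are invariant by direct change of variables. For the fractional seminorm I would use the identity $\mathscr{F}(T_g u)(\xi) = \mathscr{F} u(g^{-1}\xi)$, a direct consequence of~\eqref{eq:Fourier}, together with the rotational invariance of the weight $|\xi|^{2s}$ in the Plancherel representation recalled in Section~\ref{Sect2}. Finally, the nonlocal coupling $\int_{\R^3} \Phi_u u^2 \,\d x$ is invariant because $\Phi_{T_g u} = T_g \Phi_u$: indeed, since $\Delta$ commutes with orthogonal transformations, $T_g\Phi_u$ solves $-\Delta\Phi = 2\pi (T_g u)^2$, and the identification then follows from the uniqueness part of Lemma~\ref{lem:Phiu} by exactly the same argument used there to prove~(iii).

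With invariance established, one implication of the lemma is immediate: a critical point of $J$ on $H^1(\R^3)$ is a fortiori critical for any restriction. For the converse, let $u\in H^1_r(\R^3)$ be a critical point of $J|_{H^1_r(\R^3)}$ and let $v\in H^1(\R^3)$ be arbitrary. Differentiating the invariance relation $J(T_g w)=J(w)$ at $w=u$, and using $T_g u = u$ since $u$ is radial, I obtain $J'(u)[T_g v] = J'(u)[v]$ for every $g\in O(3)$. Since $O(3)$ is a compact topological group, I would then average this identity with respect to the normalized Haar measure $\d g$, which gives
$$J'(u)[v] = \int_{O(3)} J'(u)[T_g v]\,\d g = J'(u)\!\left[\int_{O(3)} T_g v \,\d g\right] = J'(u)[\bar v],$$
where $\bar v := \int_{O(3)} T_g v \,\d g$ is a well-defined Bochner integral in $H^1(\R^3)$ (by continuity of $g\mapsto T_g v$ and compactness of $O(3)$) and is manifestly $T_g$-invariant, hence $\bar v\in H^1_r(\R^3)$. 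The criticality of $u$ on $H^1_r(\R^3)$ therefore forces $J'(u)[\bar v]=0$, and consequently $J'(u)[v]=0$, proving that $u$ is a critical point of $J$ on all of $H^1(\R^3)$.

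The only mildly technical point is the invariance of the nonlocal coupling $\int \Phi_u u^2 \,\d x$, which is reduced to uniqueness for the Poisson equation already established in Lemma~\ref{lem:Phiu}; everything else is the standard Palais averaging scheme.
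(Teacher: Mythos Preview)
Your proof is correct and follows the standard Palais principle of symmetric criticality: establish the isometric $O(3)$-action, verify invariance of each term of $J$ (including the nonlocal coupling via $\Phi_{T_g u}=T_g\Phi_u$ and uniqueness), and then average over the Haar measure to pass from criticality on $H^1_r(\R^3)$ to criticality on $H^1(\R^3)$. The paper does not give a self-contained argument here; it simply refers to \cite[Lemma~3.1]{CCMV23}, where the same symmetric criticality scheme is carried out for the Klein--Gordon--Maxwell functional, so your approach is essentially the one the paper invokes by reference.
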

\begin{proof}
    The arguments of the proof of~\cite[Lemma~3.1]{CCMV23} apply as well for the functional $J$ defined in~\eqref{eq:J}.
\end{proof}
\begin{lemma}\label{condgeome1}
Assume the validity of the assumptions of Theorem~\ref{mainthm}. Then
\begin{itemize}
    \item when $p\in(4,6)$, the functional $J$ satisfies $(i)$ and $(ii)$ of Theorem~\ref{Rabinowitz86} in $X=H^1_r(\R^3)$, with $X_1=\{0\}$ and $X_2=X$;
    \item when $p=4$, the functional $J$ satisfies $(i)$ and $(ii)$ of Theorem~\ref{thm:AR} in $X=H^1_r(\R^3)$.
\end{itemize}
\end{lemma}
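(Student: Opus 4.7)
The plan is to verify, on the radial subspace $H^1_r(\R^3)$, the geometric conditions of Theorem~\ref{Rabinowitz86} when $p\in(4,6)$ and of Theorem~\ref{thm:AR} when $p=4$. The cornerstone for condition $(i)$ in both cases is the coercivity of the quadratic part of $J$: setting $\mu:=2m\omega/\hbar^2$, I will show that under~\eqref{eq:alpha} there is a constant $C_0>0$ with
$$\frac{\hbar^2}{4m}\mathcal{B}_\alpha(u,u)+\frac{\omega}{2}\|u\|_2^2\ge C_0\|u\|_{H^1}^2\quad\text{for all $u\in H^1(\R^3)$}.$$
For $\alpha\ge 0$ this is immediate. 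For $\alpha<0$, the estimate~\eqref{young} applied to $\alpha\|(-\Delta)^{s/2}u\|_2^2$ (with the inequality flipped by the sign of $\alpha$) yields the lower bound $(1+\alpha s\varepsilon)\|\nabla u\|_2^2+(\mu+\alpha(1-s)\varepsilon^{-s/(1-s)})\|u\|_2^2$; the two coefficients can be simultaneously made strictly positive for a suitable $\varepsilon>0$ if and only if $|\alpha|<s^{-s}(1-s)^{s-1}\mu^{1-s}=\alpha_0(s,\mu)$, and this threshold, obtained by matching the two coefficients at zero, is exactly the content of~\eqref{eq:alpha}. Condition $(i)$ then follows at once: since $\Phi_u\ge 0$ by Lemma~\ref{lem:Phiu}--(i) and $H^1(\R^3)\hookrightarrow L^p(\R^3)$ via~\eqref{eq:Sobolev}, we obtain $J(u)\ge C_0\|u\|_{H^1}^2-(C_p^p/p)\|u\|_{H^1}^p$, which on a sphere $\{\|u\|_{H^1}=\varrho\}$ of sufficiently small radius lies above some $\delta>0$.

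For condition $(ii)$ in Theorem~\ref{Rabinowitz86} (case $p\in(4,6)$), I will take any finite-dimensional subspace $Y\subset H^1_r(\R^3)$. Since all norms are equivalent on $Y$, on its $H^1$-unit sphere we have uniform bounds $\mathcal{B}_\alpha(v,v)\le M_1$, $\int \Phi_v v^2\,\d x\le M_2$ (the latter via~\eqref{eq:key}), and $\|v\|_p^p\ge c>0$. Writing a generic $u\in Y$ as $u=Rv$ with $\|v\|_{H^1}=1$ and using the homogeneity $\Phi_{Rv}=R^2\Phi_v$ from Lemma~\ref{lem:Phiu}--(ii), I obtain
$$J(Rv)\le C_1R^2+C_2R^4-\frac{c}{p}R^p,$$
whose right-hand side is non-positive for all $R\ge R(Y)$ large enough because $p>4$.

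For condition $(ii)$ in Theorem~\ref{thm:AR} (case $p=4$) the geometry is more delicate and I will exploit the scaling of Definition~\ref{def:u-lambda}. Fix any $u\in H^1_r(\R^3)\setminus\{0\}$ and consider $u_\lambda:=u_{\lambda,1,2}$, which is still radial; by Lemma~\ref{lem:stime-lambda}, together with $\Phi_{u_\lambda}=(\Phi_u)_{\lambda,1,2}$ and direct changes of variable in $\|u_\lambda\|_4^4$ and $\int \Phi_{u_\lambda}u_\lambda^2\,\d x$, I compute
$$J(u_\lambda)=\frac{\hbar^2}{4m}\bigl[\lambda^3\|\nabla u\|_2^2+\alpha\lambda^{2s+1}\|(-\Delta)^{s/2}u\|_2^2\bigr]+\frac{\omega}{2}\lambda\|u\|_2^2+\frac{\lambda^3}{4}\int_{\R^3}\Phi_u u^2\,\d x-\frac{\lambda^5}{4}\|u\|_4^4.$$
Since $2s+1<3<5$, the negative $\lambda^5$-term dominates all the others as $\lambda\to\infty$, so $J(u_\lambda)\to-\infty$ while $\|u_\lambda\|_{H^1}\to\infty$; choosing $\lambda$ large enough produces the required test function. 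The main technical obstacle I anticipate is the sharp identification of the threshold $\alpha_0(s,\mu)$ in the first step: any non-optimized use of~\eqref{young} would yield coercivity only under a strictly stronger restriction on $\alpha$, spoiling the sharpness of~\eqref{eq:alpha}. Once this is in place, the remaining verifications reduce to the scaling computations outlined above.
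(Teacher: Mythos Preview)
Your proposal is correct and follows essentially the same approach as the paper's proof: the coercivity of the quadratic part via~\eqref{young} with the sharp threshold $\alpha_0(s,2m\omega/\hbar^2)$, the sign of $\Phi_u$ plus Sobolev for condition~$(i)$, equivalence of norms on finite-dimensional $Y$ for condition~$(ii)$ when $p\in(4,6)$, and the scaling $u_{\lambda,\beta,\gamma}$ with $\gamma=2\beta$ (you fix $\beta=1$, $\gamma=2$, the paper leaves $\beta>0$ free) for $p=4$. The only cosmetic difference is that the paper bounds the fractional term by $\alpha^+\|(-\Delta)^{s/2}u\|_2^2$ to write an inequality, whereas you keep the exact expression with $\alpha$; both lead to the same conclusion since $2s+1<5$.
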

\begin{proof}
We divide the proof in two steps.

\noindent{\bf Step 1.} The first geometric condition in Theorems~\ref{Rabinowitz86} and~\ref{thm:AR}.

We claim that for every $p\in[4,6)$ there exist $\delta,\varrho>0$ such that
\begin{equation*}
\inf J(S_\varrho) \ge \delta,
\end{equation*}
where $S_{\varrho}:=\{u\in H^1_r(\R^3) \,:\, \|u\|_{H^1}=\varrho\}$.
Indeed, by~\eqref{young}, for any $u\in H^1_r(\R^3)$ and $\varepsilon>0$, we have
\begin{align*}
    &\dfrac{\hbar^2}{4m}\mathcal{B}_\alpha(u,u)+\frac{\omega
    }{2}\|u\|^2_2\\
    &\ge \dfrac{\hbar^2}{4m}\|\nabla u\|_2^2-\dfrac{\alpha^-\hbar^2}{4m}\left(s\varepsilon \|\nabla u\|_2^2+(1-s)\varepsilon^{-\frac{s}{1-s}}\|u\|^2_2\right)+\frac{\omega
    }{2}\|u\|^2_2\\
    &= \dfrac{\hbar^2}{4m}\left(1-\alpha^{-}s\varepsilon\right)\|\nabla u\|_2^2+\frac{1}{2}\left(\omega-\dfrac{\alpha^-\hbar^2}{2m}(1-s)\varepsilon^{-\frac{s}{1-s}}\right)\|u\|^2_2,
\end{align*}
where $\alpha^-:=\max\{-\alpha,0\}$ denotes the negative part of $\alpha$.
Let us consider the following system
\begin{equation}\label{eq:system}
\begin{cases}
1-\alpha^-s\varepsilon>0,\\
\omega-\dfrac{\alpha^-\hbar^2}{2m}(1-s)\varepsilon^{-\frac{s}{1-s}}>0.
\end{cases}
\end{equation}
The first inequality of~\eqref{eq:system} holds whenever
\begin{equation*}
\alpha^-s\varepsilon<1,
\end{equation*}
while the second inequality of~\eqref{eq:system} leads us to
\begin{align*}
\alpha^-(1-s)\varepsilon^{-\frac{s}{1-s}}&<\frac{2m\omega}{\hbar^2}.
\end{align*}
Therefore, the system~\eqref{eq:system} is satisfied when either $\alpha^-=0$ or $\alpha^->0$ and
$$\frac{(1-s)^{\frac{1-s}{s}}\left(\hbar^2\right)^{\frac{1-s}{s}}\left(\alpha^-\right)^{\frac{1-s}{s}}}{(2m\omega)^{\frac{1-s}{s}}}<\varepsilon<\frac{1}{\alpha^-s}.$$
Since $\alpha$ satisfies~\eqref{eq:alpha}, which implies
\begin{equation*}
\alpha^{-}<\left(\frac{2m\omega}{\hbar^2}\right)^{1-s}(1-s)^{s-1}s^{-s},
\end{equation*}
there exists $\varepsilon_0\in(0,\infty)$ such that
\begin{equation}\label{eq:c1c2}
c_1:=\frac{\hbar^2}{2m}(1-\alpha^{-}s\varepsilon_0)>0\quad\text{and}\quad c_2:=\omega-\dfrac{\alpha^-\hbar^2}{2m}(1-s)\varepsilon_0^{-\frac{s}{1-s}}>0.
\end{equation}
Hence we get
\begin{align}\label{eq:below}
    \dfrac{\hbar^2}{4m}\mathcal{B}_\alpha(u,u)+\frac{\omega
    }{2}\|u\|^2_2\ge& \frac{1}{2}\min\{c_1,c_2\}\|u\|^2_{H^1}.
\end{align}
Therefore, by also using Lemma~\ref{lem:Phiu}--(i) and~\eqref{eq:Sobolev}, for any $u\in S_{\varrho}$ we deduce
\begin{align*}
J(u)=&\dfrac{\hbar^2}{4m}\mathcal{B}_\alpha(u,u)+\frac{\omega
    }{2}\|u\|^2_2+\dfrac{1}{4}\int_{\R^3}\Phi_u u^2\,\d x-\frac{1}{p}\|u\|^p_p\\
    \ge& \frac{1}{2}\min\{c_1,c_2\}\|u\|^2_{H^1}-\frac{C_p^p}{p}\|u\|^p_{H^1}\\
    =& \frac{1}{2}\min \{c_1,c_2\} \cdot \varrho^2-\frac{C_p^p}{p} \cdot \varrho^p\\
    =&\varrho^2 \left( \frac{1}{2}\min \{c_1,c_2\}-\frac{C_p^p}{p}\cdot \varrho^{p-2}\right)>0,
\end{align*}
where the last inequality holds for
\begin{equation*}
  \varrho<\left(\frac{p\min \{c_1,c_2\}}{2C_p^p}\right)^\frac{1}{p-2}.
\end{equation*}
Thus condition $(i)$ is satisfied for both Theorems~\ref{Rabinowitz86} and~\ref{thm:AR}.
\medskip

\noindent{\bf Step 2.} The second geometric condition in Theorems~\ref{Rabinowitz86} and~\ref{thm:AR}. We consider two cases.

\noindent{\bf Case 1.}
For $p\in(4,6)$, we fix a finite dimensional space $Y\subset H_r^1(\R^3)$ and $u\in Y$.

By the continuity of $\mathcal{B}_\alpha$,~\eqref{young},~\eqref{eq:key} and~\eqref{eq:J}, there exists a positive constant $K>0$ such that
\begin{equation*}
J(u)\le K\|u\|_{H^1}^2+\frac{\pi}{2}C_D^2\|u\|_{\frac{12}{5}}^4-\frac{1}{p}\|u\|_p^p\to-\infty
\end{equation*}
as $\|u\|_{H^1}\to \infty$, since on $Y$ all norms are equivalent. Therefore, $(ii)$ of Theorem~\ref{Rabinowitz86} is satisfied for $p\in(4,6)$.

\noindent{\bf Case 2.}
Let $p=4$ and, for every $\lambda>1$, $\beta,\gamma\in\R$ and $u\in H^1_r(\R^3)$, let $u_{\lambda,\beta,\gamma}$ be as in Definition~\ref{def:u-lambda}.
We first remark that, by a simple change of variable, we get
\[
\int_{\R^3}\Phi_{u_{\lambda,\beta,\gamma}}u_{\lambda,\beta,\gamma}^2\,\d x=\lambda^{4\gamma-5\beta}\int_{\R^3}\Phi_u u^2\,\d x \quad\text{and}\quad \|u_{\lambda,\beta,\gamma}\|_4^4=\lambda^{4\gamma-3\beta}\|u\|_4^4.
\]
Then,  also using Lemma~\ref{lem:stime-lambda}, for any $u\in H^1_r(\R^3)\setminus\{0\}$ we have
\begin{align*}
J(u_{\lambda,\beta,\gamma})\le &\frac{\hbar^2}{4m}\lambda^{2\gamma-\beta}\|\nabla u\|_2^2+\frac{\alpha^+\hbar^2}{4m}\lambda^{2\gamma+2s\beta-3\beta}\|(-\Delta)^{\frac{s}{2}}u\|_2^2\\
&+\frac{\omega}{2}\lambda^{2\gamma-3\beta}\|u\|_2^2+\frac{1}{4}\lambda^{4\gamma-5\beta}\int_{\R^3}\Phi_u u^2\,\d x-\frac{1}{4}\lambda^{4\gamma-3\beta}\|u\|_4^4.
\end{align*}
We want to prove that $J(u_{\lambda,\beta,\gamma})\le 0$ for some suitable choice of $\lambda>1$ and $\beta,\gamma\in\R$.

For example, if we assume the validity of the following system
\begin{equation}\label{system}
\begin{cases}
4\gamma-3\beta>0,\\
4\gamma-3\beta>4\gamma-5\beta,\\
4\gamma-3\beta>2\gamma-3\beta,\\
4\gamma-3\beta>2\gamma+2s\beta-3\beta,\\
4\gamma-3\beta>2\gamma-\beta,
\end{cases}
\end{equation}
then $J(u_{\lambda,\beta,\gamma})\to -\infty$ as $\lambda\to \infty$. We then look for couples $(\beta,\gamma)$ satisfying~\eqref{system}.
From the third inequality, we must take $\gamma>0$. The second and the fifth inequalities imply
$$0<\frac{\beta}{\gamma}<1,$$
which is satisfied for $\gamma=2\beta$. This choice satisfies also the first and the forth inequalities, being $s\in(0,1)$.

Notice that, by Lemma~\ref{lem:stime-lambda}, we have
$$\|u_{\lambda,\beta,2\beta}\|_{H^1}^2=\lambda^\beta\|u\|_2^2+\lambda^{3\beta}\|\nabla u\|_2^2\to \infty\quad\text{as $\lambda\to\infty$},$$
therefore condition $(ii)$ of Theorem~\ref{thm:AR} is satisfied for $p=4$.
\end{proof}
\begin{lemma}\label{PS1}
Under the assumptions of Theorem~\ref{mainthm}, the functional $J|_{H^1_r(\R^3)}$ satisfies $(iii)$ in Theorems~\ref{Rabinowitz86} and~\ref{thm:AR}.
\end{lemma}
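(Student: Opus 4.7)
My plan is to verify the Palais--Smale condition for $J$ restricted to $H^1_r(\R^3)$ in the standard two-stage way: first establish boundedness of any Palais--Smale sequence, then upgrade weak to strong convergence using the Strauss compact embedding and the structure of the operator $\mathcal{L}_\alpha$.

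\smallskip
\noindent\textbf{Boundedness.} Let $(u_n)_n\subset H^1_r(\R^3)$ satisfy $|J(u_n)|\le C$ and $J'(u_n)\to 0$ in $H^{-1}(\R^3)$. The key identity is
\[
pJ(u_n)-J'(u_n)[u_n]=\frac{(p-2)\hbar^2}{4m}\mathcal{B}_\alpha(u_n,u_n)+\frac{(p-2)\omega}{2}\|u_n\|_2^2+\frac{p-4}{4}\int_{\R^3}\Phi_{u_n}u_n^2\,\d x.
\]
For $p\in(4,6)$ the last term is nonnegative by Lemma~\ref{lem:Phiu}--(i); for $p=4$ it vanishes identically. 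In both cases, combined with the coercivity estimate~\eqref{eq:below} (which hinges precisely on the parameter condition~\eqref{eq:alpha}), I obtain a lower bound
\[
pJ(u_n)-J'(u_n)[u_n]\ge (p-2)\min\{c_1,c_2\}\,\|u_n\|_{H^1}^2,
\]
while the left-hand side is bounded above by $p|J(u_n)|+\|J'(u_n)\|_{H^{-1}}\|u_n\|_{H^1}\le C'+o(1)\|u_n\|_{H^1}$. This forces $(u_n)_n$ to be bounded in $H^1(\R^3)$.

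\smallskip
\noindent\textbf{Passage to the limit.} By reflexivity I extract a subsequence with $u_n\rightharpoonup u$ in $H^1_r(\R^3)$. The crucial tool here is the classical Strauss compact embedding $H^1_r(\R^3)\hookrightarrow\hookrightarrow L^q(\R^3)$ for every $q\in(2,6)$, which gives $u_n\to u$ strongly in $L^q$ for each such $q$, in particular for $q=p$ and for $q=12/5$. Strong $L^p$ convergence, together with the uniform $H^1$ bound and the elementary inequality for $|t|^{p-2}t$, implies $\int_{\R^3}(|u_n|^{p-2}u_n-|u|^{p-2}u)(u_n-u)\,\d x\to 0$. Strong $L^{12/5}$ convergence, combined with estimate~\eqref{eq:key2} (so that $\|\nabla\Phi_{u_n}\|_2$ is bounded) and a standard argument for the linear Poisson-type equation~\eqref{eq:Phi}, yields $\Phi_{u_n}\to \Phi_u$ in $\mathcal{D}^{1,2}(\R^3)$, from which $\int_{\R^3}(\Phi_{u_n}u_n-\Phi_u u)(u_n-u)\,\d x\to 0$ via H\"older and~\eqref{eq:D}.

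\smallskip
\noindent\textbf{Strong convergence.} Since $J'(u_n)\to 0$ in $H^{-1}$ and $(u_n-u)$ is bounded in $H^1_r$, while $u_n\rightharpoonup u$ gives $J'(u)[u_n-u]\to 0$, I deduce $(J'(u_n)-J'(u))[u_n-u]\to 0$. Subtracting off the two vanishing non-quadratic contributions established above leaves
\[
\frac{\hbar^2}{2m}\mathcal{B}_\alpha(u_n-u,u_n-u)+\omega\|u_n-u\|_2^2\longrightarrow 0,
\]
and the coercivity estimate~\eqref{eq:below} (applied to $u_n-u$) converts this into $\|u_n-u\|_{H^1}\to 0$, completing the $(PS)$ verification in both regimes. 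The main technical delicacy is the Maxwell-coupling term: handling $\int(\Phi_{u_n}u_n-\Phi_u u)(u_n-u)\,\d x$ requires using the explicit linear dependence of $\Phi_u$ on $u^2$ (or equivalently~\eqref{eq:key2}) to transfer the strong $L^{12/5}$-convergence of $u_n$ into strong convergence of $\Phi_{u_n}$; the rest is streamlined by the fact that the nonlocal part of $\mathcal{L}_\alpha$ is controlled by the local part via~\eqref{young}, which is exactly why~\eqref{eq:alpha} suffices throughout.
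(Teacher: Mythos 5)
Your proposal is correct and follows essentially the same route as the paper: the identity for $pJ(u_n)-J'(u_n)[u_n]$ combined with the coercivity estimate~\eqref{eq:below} for boundedness, the compact radial embedding for strong $L^q$ convergence with $q\in(2,6)$, and the final quadratic-form estimate on $u_n-u$ closed again by~\eqref{eq:below}. The only (harmless) deviations are a factor of $2$ in the lower-bound constant and your upgrading of $\Phi_{u_n}$ to strong $\mathcal{D}^{1,2}$-convergence, whereas the paper only needs the boundedness of $\|\Phi_{u_n}\|_6$ together with $\|u_n-u\|_{12/5}\to 0$ in the H\"older estimate of the coupling term.
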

\begin{proof}
Let us fix a $(PS)$ sequence $(u_n)_n\subset H^1_r(\R^3)$.
Then, by Definition~\ref{Definizione 2.7},~\eqref{gradiente} and~\eqref{eq:below}, for all $p\in[4,6)$ we have
\begin{equation}\label{case1lower}
\begin{aligned}
    &pJ(u_n)-J'(u_n)[u_n]\\
    &=\left(\frac{p}{2}-1\right)\left(\dfrac{\hbar^2}{2m}\mathcal{B}_\alpha(u_n,u_n)+\omega\|u_n\|^2_2\right)+\left(\frac{p}{4}-1\right)\int_{\R^3}\Phi_{u_n}u_n^2\,\d x\\
    &\ge\left(\frac{p}{2}-1\right)\left(\dfrac{\hbar^2}{2m}\mathcal{B}_\alpha(u_n,u_n)+\omega\|u_n\|^2_2\right)\\
    &\ge \left(\frac{p}{2}-1\right)\min\{c_1,c_2\}\|u_n\|_{H^1}^2,
\end{aligned}
\end{equation}
where $c_1>0$ and $c_2>0$ are the two constants defined in~\eqref{eq:c1c2}.

Since $(J(u_n))_n$ is bounded in $\R$ and $(J'|_{H_r^1(\R^3)}(u_n))_n$ is bounded in $(H^1_r(\R^3))'$, being $(u_n)_n$ a $(PS)$ sequence, there exist two positive constants $K_1,K_2$ such that
\begin{equation}\label{PSseq}
    J(u_n)\le\ K_1\quad\text{and}\quad|J'(u_n)[u_n]|\le K_2\|u_n\|_{H^1}\quad\text{for any }n\in\mathbb{N}.
\end{equation}
Hence, by~\eqref{case1lower} and~\eqref{PSseq}, we get
\begin{equation*}
    pK_1+K_2\|u_n\|_{H^1}\ge \left(\frac{p}{2}-1\right)\min\{c_1,c_2\}\|u_n\|_{H^1}^2\quad\text{for any }n\in\mathbb{N},
\end{equation*}
which implies that $(u_n)_n$ is bounded in $H^1_r(\R^3)$.

Therefore, there exists a subsequence, not relabeled, and $u\in H_r^1(\R^3)$ such that $(u_n)_n$ converges to $u$ weakly in $H_r^1(\R^3)$, strongly in $L^p(\R^3)$ for any $p\in(2,6)$, and a.e. in $\R^3$.
To conclude, we show that the convergence in $H_r^1(\R^3)$ turns out to be strong.

By~\eqref{eq:key2}, the sequence $(\Phi_{u_n})_n\subset\mathcal{D}^{1,2}(\R^3)$ is bounded in $\mathcal{D}^{1,2}(\R^3)$, being $(u_n)_n$ bounded in $H^1_r(\R^3)$, and $2<\frac{12}{5}<6=2^*$
(see Lemma~\ref{lem:makesense} for further details).

Moreover, by~\eqref{gradiente} and~\eqref{eq:below}, we have
\begin{equation}\label{stimePS}
\begin{split}
        \min\{c_1,c_2\}\|u_n-u\|_{H^1}^2&\le\dfrac{\hbar^2}{2m}\mathcal{B}_{\alpha}(u_n-u,u_n-u)+\omega\|u_n-u\|^2_2\\
    &=J'(u_n)[u_n-u]-J'(u)[u_n-u]\\
    &\quad-\int_{\R^3}(\Phi_{u_n}u_n-\Phi_uu)(u_n-u)\,\d x\\
    &\quad+\int_{\R^3}(|u_n|^{p-2}u_n-|u|^{p-2}u)(u_n-u)\,\d x.
\end{split}
\end{equation}

Since $(J'(u_n))_n$ strongly converges to $0$ in $(H^1_r(\R^3))'$ and $(u_n)_n$ weakly converges to $u$  in $H^1_r(\R^3)$ as $n\to\infty$, the first two terms on the right hand side of~\eqref{stimePS} converge to 0 as $n\to\infty$.
Moreover, as in Lemma~\ref{lem:makesense},
\begin{align*}
    \left|\int_{\R^3}(\Phi_{u_n}u_n-\Phi_u u)(u_n-u)\,\d x\right|\le\left(\|\Phi_{u_n}\|_6\|u_n\|_\frac{12}{5}+\|\Phi_u\|_6\|u\|_\frac{12}{5}\right)\|u_n-u\|_\frac{12}{5}&,\\
    \left|\int_{\R^3}(|u_n|^{p-2}u_n-|u|^{p-2}u)(u_n-u)\,\d x\right|\le(\|u_n\|_p^{p-1}+\|u_n\|_p^{p-1})\|u_n-u\|_p&,
\end{align*}
that is,
\begin{align*}
    \left|\int_{\R^3}(\Phi_{u_n}u_n-\Phi_u u)(u_n-u)\,\d x\right|&\to 0,\\
    \left|\int_{\R^3}(|u_n|^{p-2}u_n-|u|^{p-2}u)(u_n-u)\,\d x\right|&\to 0,
\end{align*}
as $n\to \infty$, and the thesis follows.
\end{proof}
We can now conclude this section by collecting all the results given in the previous lines to prove Theorem~\ref{mainthm}.
\begin{proof}[Proof of Theorem~\ref{mainthm}]
The proof follows by Lemmas~\ref{Lemmaradial}--\ref{PS1} and by Theorems~\ref{Rabinowitz86} and~\ref{thm:AR}.
\end{proof}


\section{The SM equations with coercive potentials}\label{Sect4}

In order to prove that, for $p\in(4,6)$, the functional $\mathcal{J}$ defined in~\eqref{eq:J2} satisfies the geometric conditions $(i)$ and $(ii)$ of Theorem~\eqref{Rabinowitz86}, we use arguments similar to those in~\cite[Section~4]{CCMV23}.
We introduce a new operator $\mathcal{L}_{\alpha,V}\colon W\to W'$, defined by
$$\mathcal{L}_{\alpha,V}u=\frac{\hbar^2}{2m}\mathcal{L}_\alpha u+Vu\quad\text{for $u\in W$}.$$
As done in Definition~\ref{def:Balpha}, we can naturally associate to $\mathcal{L}_{\alpha,V}$ a bilinear form $\mathcal{B}_{\alpha,V}\colon W\times W\to \R$, defined by
\begin{align*}
    \mathcal{B}_{\alpha,V}(u,v):=\frac{\hbar^2}{2m}\mathcal{B}_\alpha(u,v)+\int_{\R^3}Vuv\,\d x\quad\text{for all $u,v\in W$}.
\end{align*}
We remark that $\mathcal{B}_{\alpha,V}$ is continuous on $W\times W$ and, by~\eqref{young}, for all $\alpha\in\R$ there exists a constant $\mu =\mu (s,\alpha,V_0)\ge 0$ such that
\begin{equation}\label{eq:below2}
    \mathcal{B}_{\alpha,V}(u,u)+\mu \|u\|_2^2\ge \frac{1}{2}\|u\|_W^2\quad\text{for all }u\in W.
\end{equation}
Since, by Lemma~\ref{lem:com}, the embedding $W\hookrightarrow L^2(\R^3)$ is continuous, dense, and compact, we can apply the spectral decomposition result given in~\cite[Proposition~A.4]{CCMV23}.
Hence, there exists an increasing sequence $(\lambda_k)_k$ of eigenvalues of $\mathcal{L}_{\alpha,V}$ satisfying
\begin{equation*}
    -\mu < \lambda_1\le \lambda_2\le\cdots\le \lambda_k\to \infty\quad\text{as $k\to\infty$}.
\end{equation*}
Moreover, for all $k\in \mathbb N$, the eigenvalue $\lambda_k$ has finite multiplicity and there exists a sequence of eigenvectors $(e_k)_k\subset W$, corresponding to $(\lambda_k)_k$, which is an orthonormal basis of $L^2(\R^3)$.
We define the spaces
\begin{align*}
    H_1&:=\{0\},\\
    H_k&:=\textrm{span}\{e_1,\dots,e_{k-1}\}\subset W\text{ for all $k\ge 2$},
\end{align*}
and
\begin{align*}
    \mathbb P_1&:=W,\\
    \mathbb P_k&:=\left\{u\in W:\int_{\R^3}ue_j=0\text{ for all $j=1,\dots,k-1$}\right\}\text{ for all }k\ge 2.
\end{align*}
Then $W$ is decomposable as the direct sum of these two closed subspace, that is as  $W=H_k\oplus\mathbb P_k$ for all $k\in\N$, with $\dim H_k=k-1<\infty$, and
\begin{equation}\label{eq:lambdak}
    \lambda_k:=\min_{u\in \mathbb P_k\setminus\{0\}}\frac{\mathcal{B}_{\alpha,V}(u,u)}{\|u\|_2^2}.
\end{equation}

Let $k_0\in\N$ be such that
\begin{equation}\label{k0}
    \lambda_{k_0}>0.
\end{equation}
In view of~\eqref{eq:below2} and~\eqref{eq:lambdak}, there exists a constant $c_0=c_0(s,\alpha,V_0)>0$ satisfying
\begin{align}\label{eq:ck0}
\mathcal{B}_{\alpha,V}(u,u)\ge c_0\|u\|_W^2\quad\text{for all $u\in \mathbb P_{k_0}$}.
\end{align}
Indeed, for all $u\in\mathbb P_{k_0}$, by~\eqref{eq:below2} and~\eqref{eq:lambdak} we have
\begin{align*}
    \mathcal{B}_{\alpha,V}(u,u)&=\mathcal{B}_{\alpha,V}(u,u)+\mu \|u\|_2^2-\mu\|u\|_2^2\\
    &=\left(1-\frac{\mu }{\lambda_{k_0}+\mu }\right)\left(\mathcal{B}_{\alpha,V}(u,u)+\mu \|u\|_2^2\right)\\
    &\quad+\left(\frac{\mu}{\lambda_{k_0}+\mu }\right)\left(\mathcal{B}_{\alpha,V}(u,u)+\mu \|u\|_2^2\right)-\mu\|u\|_2^2\\
    &\ge c_0\,\,\|u\|_W^2
\end{align*}
where $c_0:=\frac 12\left(1-\frac{\mu }{\lambda_{k_0}+\mu }\right)$.
\begin{lemma}\label{condgeome2}
Assume the validity of the assumptions of Theorem~\ref{mainthm2}.
Then
\begin{itemize}
    \item when $p\in(4,6)$, the functional $\mathcal{J}$ satisfies $(i)$ and $(ii)$ of Theorem~\ref{Rabinowitz86} in $X=W$, with $X_1=H_{k_0}$ and $X_2=\mathbb P_{k_0}$, where $\lambda_{k_0}$ is given by~\eqref{k0};
    \item when $p=4$, the functional $\mathcal{J}$ satisfies $(i)$ and $(ii)$ of Theorem~\ref{thm:AR} in $X=W$.
\end{itemize}
\end{lemma}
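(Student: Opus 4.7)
The plan is to mimic the structure of the proof of Lemma~\ref{condgeome1}, replacing the $H^1_r(\R^3)$ coercivity estimate by the spectral one~\eqref{eq:ck0} in the subcritical case, and retaining the Young-type argument (with $\omega$ replaced by $V_0$) in the critical case $p=4$.

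For condition $(i)$ with $p\in(4,6)$, I restrict $\mathcal{J}$ to $X_2=\mathbb{P}_{k_0}$. On this subspace, the estimate~\eqref{eq:ck0} gives $\mathcal{B}_{\alpha,V}(u,u)\ge c_0\|u\|_W^2$; together with $\Phi_u\ge 0$ (Lemma~\ref{lem:Phiu}$(i)$) and the continuous embedding $W\hookrightarrow L^p(\R^3)$ from Lemma~\ref{lem:com}, this yields
\begin{equation*}
\mathcal{J}(u)\ge \frac{c_0}{2}\|u\|_W^2-\frac{C_p^p}{p}\|u\|_W^p\qquad\text{for all }u\in\mathbb{P}_{k_0},
\end{equation*}
so that $\inf\mathcal{J}(S_\varrho\cap \mathbb{P}_{k_0})\ge\delta>0$ for a sufficiently small $\varrho>0$. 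For $p=4$, condition $(i)$ is required on the whole sphere $S_\varrho\subset W$, so I replace the spectral estimate by a direct coercivity bound: using $V_0>0$ and~\eqref{eq:alpha2}, I repeat verbatim the Young inequality computation of Step~1 of Lemma~\ref{condgeome1} with $\omega$ replaced by $V_0$, producing constants $c_1,c_2>0$ with $\frac{\hbar^2}{4m}\mathcal{B}_\alpha(u,u)+\frac{V_0}{2}\|u\|_2^2\ge \frac{1}{2}\min\{c_1,c_2\}\|u\|_{H^1}^2$. Adding the nonnegative term $\frac{1}{2}\int_{\R^3}(V-V_0)u^2\,\d x$ and noting $\|u\|_W^2=\|u\|_{H^1}^2+\int_{\R^3}(V-V_0)u^2\,\d x$, I upgrade the estimate to $\mathcal{J}(u)\ge c\|u\|_W^2-\frac{C_p^p}{p}\|u\|_W^p$ on all of $W$, and conclude as before.

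Condition $(ii)$ for $p\in(4,6)$ is immediate: on any finite-dimensional $Y\subset W$ all norms are equivalent, so by continuity of $\mathcal{B}_{\alpha,V}$ and~\eqref{eq:key} I bound
\begin{equation*}
\mathcal{J}(u)\le K_1\|u\|_W^2+K_2\|u\|_W^4-K_3\|u\|_W^p\qquad\text{for all }u\in Y,
\end{equation*}
which tends to $-\infty$ as $\|u\|_W\to\infty$ since $p>4$.

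The delicate step, and the one I expect to be the main obstacle, is condition $(ii)$ for $p=4$, since the scaling trick of Lemma~\ref{condgeome1}, Case 2, cannot be applied verbatim to a non-constant potential $V$. I fix a nontrivial $u\in {\rm\bf C}_c^\infty(\R^3)\subset W$ supported in some ball $B_R$, and consider $v_\lambda:=u_{\lambda,\beta,2\beta}$ for $\beta>0$ and $\lambda>1$. Since $\lambda>1$ the support of $v_\lambda$ is contained in $B_R$ as well, and Lemma~\ref{lem:stime-lambda} yields the scalings $\|\nabla v_\lambda\|_2^2\sim \lambda^{3\beta}$, $\|(-\Delta)^{s/2}v_\lambda\|_2^2\sim \lambda^{(2s+1)\beta}$, $\int_{\R^3}\Phi_{v_\lambda}v_\lambda^2\,\d x\sim \lambda^{3\beta}$ and $\|v_\lambda\|_4^4\sim \lambda^{5\beta}$. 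The new term is handled by a direct change of variables:
\begin{equation*}
\int_{\R^3}Vv_\lambda^2\,\d x=\lambda^\beta\int_{\R^3}V(\lambda^{-\beta}y)u^2(y)\,\d y\le M_R\lambda^\beta\|u\|_2^2,
\end{equation*}
where $M_R:=\|V\|_{L^\infty(B_R)}<\infty$ by $(V_1)$. All positive contributions to $\mathcal{J}(v_\lambda)$ therefore grow as $\lambda^{k\beta}$ with $k\le 3<5$, so $\mathcal{J}(v_\lambda)\to -\infty$ as $\lambda\to\infty$; since $\|v_\lambda\|_W^2\ge \|v_\lambda\|_2^2=\lambda^\beta\|u\|_2^2\to\infty$ as well, for $\lambda$ large enough $v:=v_\lambda$ satisfies both $\|v\|_W>\delta$ and $\mathcal{J}(v)\le 0$. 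The subtlety is precisely verifying that the inhomogeneous $V$-term scales strictly slower than the $L^4$-term under the dilation; the compact support of the test function is what makes this comparison possible.
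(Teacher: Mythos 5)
Your proposal is correct and follows essentially the same route as the paper: condition $(i)$ via the spectral bound~\eqref{eq:ck0} on $\mathbb P_{k_0}$ for $p\in(4,6)$ and via the Young-inequality coercivity with $\omega$ replaced by $V_0$ for $p=4$; condition $(ii)$ via norm equivalence on finite-dimensional subspaces for $p\in(4,6)$ and via the dilation $u_{\lambda,\beta,2\beta}$ of a compactly supported test function for $p=4$, with the potential term controlled by $\max V$ on the (shrinking) support exactly as in the paper. The only differences are cosmetic: you write out the exponents for $\gamma=2\beta$ explicitly instead of citing the system~\eqref{system}, and you absorb the $(V-V_0)$-term slightly differently when upgrading the $H^1$-coercivity to a $\|\cdot\|_W$-bound.
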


\begin{proof}
We divide the proof in two steps.

\noindent{\bf Step 1.} The first geometric condition in Theorems~\ref{Rabinowitz86} and~\ref{thm:AR}. We consider two cases.

\noindent {\bf Case 1}. Let $p\in(4,6)$, and let us prove the validity of $(i)$ in Theorem~\ref{Rabinowitz86}. By~\eqref{eq:J2} and~\eqref{eq:ck0}, for all $u\in X_2=\mathbb P_{k_0}$ we have
\begin{equation*}
    \mathcal{J}(u)\ge \frac{1}{2}\mathcal{B}_{\alpha,V}(u,u)-\frac{1}{p}\|u\|_p^p\ge\left(\frac{c_0}{2}-\frac{C_p^p}{p}\|u\|_W^{p-2}\right)\|u\|_W^2,
\end{equation*}
where $C_p>0$ is the constant satisfying
\begin{equation}\label{eq:WSobolev}
    \|u\|_p\le C_p\|u\|_W\quad\text{for all $u\in W$}.
\end{equation}
Hence, as in Lemma~\ref{condgeome1}, there exist $\delta,\varrho>0$ such that
\begin{equation*}
    \inf \mathcal{J}(S_\varrho\cap X_2)\ge\delta,
\end{equation*}
where $S_\varrho:=\{u\in W\,:\, \|u\|_W=\varrho\}$.

\noindent {\bf Case 2}. Assume that $p=4$. We claim that there exist $\delta,\varrho>0$ such that
\begin{equation*}
    \inf \mathcal{J}(S_\varrho) \ge \delta.
\end{equation*}
Indeed, by~\eqref{young}, for any $u\in W$ and $\varepsilon>0$ we have
\begin{align*}
&\dfrac{\hbar^2}{4m}\mathcal{B}_{\alpha}(u,u)+\frac{1}{2}\int_{\R^3}Vu^2\,\d x\\
&\ge \dfrac{\hbar^2}{4m}\|\nabla u\|_2^2-\dfrac{\alpha^-\hbar^2}{4m}\left(s\varepsilon \|\nabla u\|_2^2+(1-s)\varepsilon^{-\frac{s}{1-s}}\|u\|^2_2\right)+\frac{1}{2}\int_{\R^3}\!\!Vu^2\d x\\
    &\ge \dfrac{\hbar^2}{4m}\left(1-\alpha^{-}s\varepsilon\right)\|\nabla u\|_2^2+\frac{1}{2}\left(1-\dfrac{\alpha^-\hbar^2}{2mV_0}(1-s)\varepsilon^{-\frac{s}{1-s}}\right)\int_{\R^3}\!\!Vu^2\d x.
\end{align*}
Let us consider the following system
\begin{equation}\label{eq:system2}
\begin{cases}
    1-\alpha^-s\varepsilon>0,\\
    1-\dfrac{\alpha^-\hbar^2}{2mV_0}(1-s)\varepsilon^{-\frac{s}{1-s}}>0.
\end{cases}
\end{equation}
As in the proof of Lemma~\ref{condgeome1}, we derive that the system~\eqref{eq:system2} is satisfied when either $\alpha^-=0$ or $\alpha^->0$ and
$$\frac{(1-s)^{\frac{1-s}{s}}\left(\hbar^2\right)^{\frac{1-s}{s}}\left(\alpha^-\right)^{\frac{1-s}{s}}}{(2mV_0)^{\frac{1-s}{s}}}<\varepsilon<\frac{1}{\alpha^-s}.$$
Since $\alpha$ satisfies~\eqref{eq:alpha2}, which implies
\begin{equation*}
    \alpha^{-}<\left(\frac{2mV_0}{\hbar^2}\right)^{1-s}(1-s)^{s-1}s^{-s},
\end{equation*}
there exists $\varepsilon_0\in(0,\infty)$ such that
\begin{equation*}
    c_1:=\frac{\hbar^2}{2m}(1-\alpha^{-}s\varepsilon_0)>0\quad\text{and}\quad c_2:=1-\dfrac{\alpha^-\hbar^2}{2mV_0}(1-s)\varepsilon_0^{-\frac{s}{1-s}}>0.
\end{equation*}
Hence we get
\begin{equation}\label{eq:below3}
\begin{aligned}
    \dfrac{\hbar^2}{2m}\mathcal{B}_{\alpha}(u,u)+\int_{\R^3}Vu^2\,\d x&\ge\frac{1}{2}\min\{c_1,c_2\}\left(\|\nabla u\|^2_2+\int_{\R^3}Vu^2\,\d x\right)\\
    &\ge\frac{1}{2}\min \{c_1,c_2\}\min\{1,V_0\}\|u\|^2_W.
\end{aligned}
\end{equation}
Therefore, by using also Lemma~\ref{lem:Phiu}--(i) and ~\eqref{eq:WSobolev}, for any $u\in S_{\varrho}$, we have
\begin{align*}
    \mathcal{J}(u)&=\dfrac{\hbar^2}{4m}\mathcal{B}_{\alpha}(u,u)+\frac{1}{2}\int_{\R^3}Vu^2\,\d x+\dfrac{1}{4}\int_{\R^3}\Phi_u u^2\,\d x-\frac{1}{4}\|u\|^4_4\\
    &\ge\frac{1}{2}\min \{c_1,c_2\}\min\{1,V_0\}\|u\|^2_W-\frac{C_4^4}{4}\|u\|^4_W\\
&= \varrho^2 \left( \frac{1}{2}\min \{c_1,c_2\}\min\{1,V_0\}-\frac{C_4^4}{4}\cdot \varrho^2\right)>0,
\end{align*}
where the last inequality is given by eventually setting
\begin{equation*}
  \varrho<\left(\frac{2\min \{c_1,c_2\}\min\{1,V_0\}}{C_4p^4}\right)^\frac{1}{2}.
\end{equation*}
Therefore, condition $(i)$ is satisfied also for Theorem~\ref{thm:AR}.

\medskip

\noindent{\bf Step 2.} The second geometric condition in Theorems~\ref{Rabinowitz86} and~\ref{thm:AR}. To prove $(ii)$ we consider again two cases.

\noindent{\bf Case 1.} Let $p\in (4,6)$ and let us prove $(ii)$ of Theorem~\ref{Rabinowitz86}. By~\eqref{young} and~\eqref{eq:key}, there exists a constant $K>0$ such that, for all finite dimensional space $Y\subset W$ and $u\in Y$, we have
\begin{equation*}
    \mathcal{J}(u)\le K\|u\|_W^2+\frac{\pi}{2}C_D^2\|u\|_{\frac{12}{5}}^4-\frac{1}{p}\|u\|_p^p\to-\infty
\end{equation*}
as $\|u\|_W\to \infty$, since all the norms are equivalent.

\noindent{\bf Case 2.}  Let $p=4$. To prove $(ii)$ of Theorem~\ref{thm:AR}, we fix $u\in{\rm\bf C}_c^\infty(\R^3)\setminus\{0\}$, $\lambda>1$, and $\beta,\gamma\in\R$, and consider $u_{\lambda,\beta,\gamma}$, introduced in Definition~\ref{def:u-lambda}. We notice that $u_{\lambda,\beta,\gamma}\in{\rm\bf C}_c^\infty(\R^3)\subset W$ and fix $L>0$ such that ${\rm supp\,} u\subseteq B_L(0)$.
Hence, by Lemma~\ref{lem:stime-lambda}, we have
\begin{align*}
    \mathcal{J}(u_{\lambda,\beta,\gamma})&\le\frac{\hbar^2}{4m}\lambda^{2\gamma-\beta}\|\nabla u\|_2^2+\frac{\alpha^+\hbar^2}{4m}\lambda^{2\gamma+2s\beta-3\beta}\|(-\Delta)^{\frac{s}{2}}u\|_2^2\\
    &\quad+\frac{1}{2}\lambda^{2\gamma-3\beta}\int_{B_L(0)}V(\lambda^{-1}x)|u(x)|^2\,\d x+\frac{1}{4}\lambda^{4\gamma-5\beta}\int_{\R^3}\Phi_u u^2\,\d x\\
    &\quad-\frac{1}{4}\lambda^{4\gamma-3\beta}\|u\|_4^4\\
    &\le\frac{\hbar^2}{4m}\lambda^{2\gamma-\beta}\|\nabla u\|_2^2+\frac{\alpha^+\hbar^2}{4m}\lambda^{2\gamma+2s\beta-3\beta}\|(-\Delta)^{\frac{s}{2}}u\|_2^2\\
    &\quad+\frac{V_L}{2}\lambda^{2\gamma-3\beta}\|u\|_2^2+\frac{1}{4}\lambda^{4\gamma-5\beta}\int_{\R^3}\Phi_u u^2\,\d x-\frac{1}{4}\lambda^{4\gamma-3\beta}\|u\|_4^4,
\end{align*}
where $V_L:=\max_{x\in B_L(0)}V(x)$. We want to prove that $\mathcal{J}(u_{\lambda,\beta,\gamma})\le 0$ for some suitable choice of $\lambda>1$ and $\beta,\gamma\in\R$.

For example, if we assume that ~\eqref{system} holds,
then we derive  that $\mathcal{J}(u_{\lambda,\beta,\gamma})\to -\infty$ as $\lambda\to \infty$.
The proof can then be completed exactly as the proof of Lemma~\ref{condgeome1}.
\end{proof}
\begin{lemma}\label{PS2}
Under the assumptions of Theorem~\ref{mainthm2}, the functional $\mathcal{J}$ satisfies $(iii)$ in Theorems~\ref{Rabinowitz86} and~\ref{thm:AR}.
\end{lemma}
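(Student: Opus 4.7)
The plan is to adapt the argument of Lemma~\ref{PS1} to the functional $\mathcal{J}$, with two key replacements: the bilinear form $\mathcal{B}_{\alpha,V}$ takes the place of the form $\frac{\hbar^2}{2m}\mathcal{B}_\alpha+\omega\|\cdot\|_2^2$, and, since we no longer restrict to radially symmetric functions, all the compactness we need comes from the compact embedding $W\hookrightarrow L^r(\R^3)$ for $r\in[2,6)$ provided by Lemma~\ref{lem:com}. Fix a $(PS)$ sequence $(u_n)_n\subset W$ and let $K_1,K_2>0$ satisfy $|\mathcal{J}(u_n)|\le K_1$ and $|\mathcal{J}'(u_n)[u_n]|\le K_2\|u_n\|_W$.

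For the boundedness step, two sub-cases arise. When $p=4$ (so $V_0>0$), the combination $p\mathcal{J}(u_n)-\mathcal{J}'(u_n)[u_n]$ cancels the term $\int\Phi_{u_n}u_n^2\,\d x$ and reduces to $\mathcal{B}_{\alpha,V}(u_n,u_n)$; together with the full coercivity~\eqref{eq:below3} this yields $\|u_n\|_W\le C$ at once. When $p\in(4,6)$ the coefficient of $\int\Phi_{u_n}u_n^2\,\d x$ in $p\mathcal{J}(u_n)-\mathcal{J}'(u_n)[u_n]$ is positive, so using $\Phi_{u_n}\ge 0$ I would obtain
\begin{equation*}
\mathcal{B}_{\alpha,V}(u_n,u_n)\le C(1+\|u_n\|_W),
\end{equation*}
while $4\mathcal{J}(u_n)-\mathcal{J}'(u_n)[u_n]$ removes the $\Phi_{u_n}$ term and, combined with~\eqref{eq:below2}, produces
\begin{equation*}
(1-4/p)\|u_n\|_p^p\le C(1+\|u_n\|_W)+\mu\|u_n\|_2^2.
\end{equation*}
Because $\mathcal{B}_{\alpha,V}$ is only coercive on $W$ modulo the $L^2$-norm (cf.~\eqref{eq:below2}), I would then argue by contradiction: supposing $\|u_n\|_W\to\infty$, set $v_n:=u_n/\|u_n\|_W$, so along a subsequence $v_n\rightharpoonup v$ in $W$ and $v_n\to v$ in $L^r(\R^3)$ for every $r\in[2,6)$. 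Dividing the first inequality by $\|u_n\|_W^2$ and using~\eqref{eq:below2} gives $\tfrac12-\mu\|v\|_2^2\le 0$, hence $v\ne 0$; dividing the second by $\|u_n\|_W^p$ and exploiting $p>2$ forces $\|v_n\|_p\to 0$, hence $v=0$, a contradiction.

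For the strong convergence, once $(u_n)_n$ is bounded in $W$ one extracts $u_n\rightharpoonup u$ in $W$ with $u_n\to u$ strongly in $L^r(\R^3)$ for all $r\in[2,6)$, in particular for $r=12/5$ and $r=p$. Mimicking~\eqref{stimePS} I would write
\begin{equation*}
\mathcal{B}_{\alpha,V}(u_n-u,u_n-u)=\mathcal{J}'(u_n)[u_n-u]-\mathcal{J}'(u)[u_n-u]-\int_{\R^3}(\Phi_{u_n}u_n-\Phi_u u)(u_n-u)\,\d x+\int_{\R^3}(|u_n|^{p-2}u_n-|u|^{p-2}u)(u_n-u)\,\d x,
\end{equation*}
and show that each summand on the right-hand side vanishes in the limit: the first from $\mathcal{J}'(u_n)\to 0$ in $W'$ and the uniform bound on $\|u_n-u\|_W$, the second from weak convergence, the remaining two from the H\"older estimates already used in Lemma~\ref{lem:makesense} applied to the strongly convergent sequences in $L^{12/5}$ and $L^p$. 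A final application of~\eqref{eq:below2} together with $\|u_n-u\|_2\to 0$ then gives $\|u_n-u\|_W\to 0$.

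The hardest step is the boundedness for $p\in(4,6)$: without the assumption $V_0>0$ the clean coercivity of Lemma~\ref{PS1} is unavailable, and we must play the nonlinear control $(1-4/p)\|u_n\|_p^p$ against the $L^2$-correction in~\eqref{eq:below2}. The compactness of $W\hookrightarrow L^2(\R^3)$ from Lemma~\ref{lem:com} is what makes the rescaled limit profile $v$ well-defined in both $L^2$ and $L^p$, so that the two inequalities above can collide to produce the contradiction.
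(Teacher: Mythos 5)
Your proposal is correct and follows essentially the same route as the paper: the same case split ($p=4$ with $V_0>0$ giving direct coercivity via~\eqref{eq:below3}; $p\in(4,6)$ handled by contradiction with the normalized sequence $v_n=u_n/\|u_n\|_W$, using the compact embedding of Lemma~\ref{lem:com} to get a nonzero $L^2$-limit and the vanishing of $\|v_n\|_p$ to force $v=0$), followed by the identical strong-convergence identity controlled by~\eqref{eq:below2}. The only cosmetic difference is that you cancel the $\Phi_{u_n}$ term exactly via $4\mathcal{J}(u_n)-\mathcal{J}'(u_n)[u_n]$, whereas the paper bounds it through~\eqref{eq:key} and kills it after dividing by $\|u_n\|_W^p$ using $p>4$; both work.
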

\begin{proof}
Let us fix a $(PS)$ sequence $(u_n)_n\subset W$. Then, for all $p\in[4,6)$ we have
\begin{equation*}
\begin{aligned}
  &p\mathcal{J}(u_n)-\mathcal{J}'(u_n)[u_n]\\
  &\ge\left(\frac{p}{2}-1\right)\left(\dfrac{\hbar^2}{2m}\mathcal{B}_\alpha(u_n,u_n)+\int_{\R^3}Vu_n^2\,\d x\right)+\left(\frac{p}{4}-1\right)\int_{\R^3}\Phi_{u_n}u_n^2\,\d x\\
  &\ge\left(\frac{p}{2}-1\right)\mathcal{B}_{\alpha,V}(u_n,u_n).
\end{aligned}
\end{equation*}

We distinguish between two possible cases.
\medskip

\noindent {\bf Case 1}
Let $p\in(4,6)$.
Then, by~\eqref{eq:below2},
\begin{equation*}
  p\mathcal{J}(u_n)-\mathcal{J}'(u_n)[u_n]\ge \left(\frac{p}{2}-1\right)\mathcal{B}_{\alpha,V}(u_n,u_n)\ge d_1\|u_n\|_W^2-d_2\|u_n\|_2^2,
\end{equation*}
where
\[
d_1:=\frac{1}{2}\left(\frac{p}{2}-1\right)\quad\text{and}\quad d_2:=\mu\left(\frac{p}{2}-1\right).
\]
\noindent Assume, by contradiction, that $\|u_n\|_W\to \infty$ and define $w_n:=\frac{u_n}{\|u_n\|_W}$ for all $n\in\N$. Since $\|w_n\|_W=1$ for all $n\in\N$, by Lemma~\ref{lem:com} there exist a subsequence, not relabeled, and a function $w\in W$ such that, as $n\to\infty$, $(w_n)_n$ converges to $w$ weakly in $W$ and strongly in $L^p(\R^3)$ for all $p\in [2,6)$.
In particular, since
\begin{equation*}
d_1-d_2\|w_n\|_2^2\le \frac{p\mathcal{J}(u_n)}{\|u_n\|_W^2}-\frac{\mathcal{J}'(u_n)[u_n]}{\|u_n\|_W^2}\to 0
\end{equation*}
as $n\to\infty$, we have
\begin{equation*}
\|w\|_2^2\ge\frac{d_1}{d_2}>0.
\end{equation*}
On the other hand, by~\eqref{eq:key} and~\eqref{eq:J2}, for all $n\in\N$ we have
\begin{equation*}
\frac{1}{p}\|u_n\|_p^p\le \frac{1}{2}\mathcal{B}_{\alpha,V}(u_n,u_n)+\frac{\pi}{2}C_D^2\|u_n\|_{\frac{12}{5}}^4-\mathcal{J}(u_n).
\end{equation*}
Therefore, since $(u_n)_n$ is a $(PS)$ sequence, there exist constants $d_3,d_4,d_5>0$ such that
\begin{align*}
    0<\frac{1}{p}\|w_n\|_p^p&\le \frac{1}{2}\frac{\mathcal{B}_{\alpha,V}(u_n,u_n)}{\|u_n\|_W^p}+\frac{\pi}{2}C_D^2\frac{\|u_n\|_{\frac{12}{5}}^4}{\|u_n\|_W^p}+\frac{|\mathcal{J}(u_n)|}{\|u_n\|_W^p}\\
    &\le \frac{d_3}{\|u_n\|_W^{p-2}}+\frac{d_4}{\|u_n\|_W^{p-4}}+\frac{d_5}{\|u_n\|_W^p}\to 0
\end{align*}
as $n\to\infty$, being $p>4$.
Hence $w=0$, which leads to a contradiction. Thus $(u_n)_n\subset W$ is bounded for $p\in(4,6)$.
\medskip

\noindent {\bf Case 2}.
Let $p=4$.
Then, $V_0>0$ and $\alpha$ satisfies~\eqref{eq:alpha2}.
Therefore, by~\eqref{eq:below3}, we have
\begin{align*}
    p\mathcal{J}(u_n)-\mathcal{J}'(u_n)[u_n]&\ge\left(\frac{p}{2}-1\right)\mathcal{B}_{\alpha,V}(u_n,u_n)\\
    &\ge\left(\frac{p}{2}-1\right)\min\{c_1,c_2\}\min\{1,V_0\}\|u_n\|_W^2.
\end{align*}
Since $(\mathcal{J}(u_n))_n$ is bounded in $\R$ and $(\mathcal{J}'(u_n))_n$ is bounded in $W'$, being $(u_n)_n$ a $(PS)$ sequence, there exist two positive constants $K_1,K_2$ such that
\begin{equation}\label{PSseq2}
\mathcal{J}(u_n)\le\ K_1\quad\text{and}\quad|\mathcal{J}'(u_n)[u_n]|\le K_2\|u_n\|_W\quad\text{for any }n\in\mathbb{N}.
\end{equation}
Hence, by~\eqref{eq:below3} and~\eqref{PSseq2}, we get
\begin{equation*}
  pK_1+K_2\|u_n\|_W\ge \left(\frac{p}{2}-1\right)\min\{c_1,c_2\}\min\{1,V_0\}\|u_n\|_W^2\quad\text{for any }n\in\mathbb{N},
\end{equation*}
which implies that $(u_n)_n$ is bounded in $W$.

\noindent Therefore, there exist a subsequence, not relabeled, and $u\in W$ such that $(u_n)_n$ converges to $u$ weakly in $W$, strongly in $L^p(\R^3)$ for any $p\in[2,6)$, and a.e. in $\R^3$.

To conclude we show that the convergence in $W$ turns out to be strong. By~\eqref{eq:key2}, the sequence $(\Phi_{u_n})_n\subset\mathcal{D}^{1,2}(\R^3)$ is bounded in $\mathcal{D}^{1,2}(\R^3)$.
Moreover, by~\eqref{eq:below2}, we have
\begin{equation}\label{stimePS2}
\begin{split}
    \frac{1}{2}\|u_n-u\|_W^2&\le\mathcal{B}_{\alpha,V}(u_n,u_n)+\mu\|u_n-u\|_2^2\\
    &=\mathcal{J}'(u_n)[u_n-u]-\mathcal{J}'(u)[u_n-u]+\mu\|u_n-u\|_2^2\\
    &\quad-\int_{\R^3}(\Phi_{u_n}u_n-\Phi_u u)(u_n-u)\,\d x\\
    &\quad+\int_{\R^3}(|u_n|^{p-2}u_n-|u|^{p-2}u)(u_n-u)\,\d x.
\end{split}
\end{equation}
Since $\mathcal{J}'(u_n)\to 0$ in $W'$, $u_n\rightharpoonup u$ in $W$, and $u_n\to u$ in $L^2(\R^3)$ as $n\to\infty$, it follows that the first three terms in the right hand side of~\eqref{stimePS2} converge to 0 as $n\to\infty$.
Moreover, as $n\to\infty$
\begin{align*}
    \left|\int_{\R^3}(\Phi_{u_n}u_n-\Phi_u u)(u_n-u)\,\d x\right|\le\left(\|\Phi_{u_n}\|_6\|u_n\|_\frac{12}{5}+\|\Phi_u\|_6\|u\|_\frac{12}{5}\right)\|u_n-u\|_\frac{12}{5}\!\!\to 0,\\
    \left|\int_{\R^3}(|u_n|^{p-2}u_n-|u|^{p-2}u)(u_n-u)\,\d x\right|\le(\|u_n\|_p^{p-1}+\|u_n\|_p^{p-1})\|u_n-u\|_p\to 0.
\end{align*}
Hence the thesis follows.
\end{proof}
We can now conclude this section by collecting all the results given in the previous lines in the proof of Theorem~\ref{mainthm2}.
\begin{proof}[Proof of Theorem~\ref{mainthm2}]
The proof follows by Lemmas~\ref{condgeome2}--\ref{PS2} and by Theorems~\ref{Rabinowitz86} and~\ref{thm:AR}.
\end{proof}


\section*{Declarations}

\bigskip
\subsection*{Ethics approval and consent to participate}
Not applicable.

\subsection*{Consent for publication}
Not applicable.

\subsection*{Availability of data and materials}
Not applicable.

\subsection*{Competing Interest}
The authors have no conflicts of interest to declare.

\subsection*{Funding}
The authors are members and acknowledge the support of {\it Gruppo Nazionale per l’Analisi Matematica, la Probabilità e le loro Applicazioni} (GNAMPA) of {\it Istituto Nazionale di Alta Matematica} (INdAM).

N.~Cangiotti acknowledges the support of the MIUR - PRIN 2017 project ``From Models to Decisions" (Prot. N. 201743F9YE).

M.~Caponi acknowledges the support of the project STAR PLUS 2020 - Linea~1 (21-UNINA-EPIG-172) ``New perspectives in the Variational modeling of Continuum Mechanics'', and of the MIUR - PRIN 2017 project ``Variational Methods for Stationary and Evolution Problems with Singularities and Interfaces'' (Prot. N.~2017BTM7SN).

A.~Maione acknowledges the support of the DFG SPP 2256 project ``Variational Methods for Predicting Complex Phenomena in Engineering Structures and Materials'' (Project number 422730790) and the support of the University of Freiburg.

M.~Caponi, A.~Maione and E.~Vitillaro are also supported by the INdAM - GNAMPA Project ``Equazioni differenziali alle derivate parziali di tipo misto o dipendenti da campi di vettori'' (Project number CUP\_E53C22001930001).

E.~Vitillaro is supported by the MIUR - PRIN 2022 project ''Advanced theoretical aspects in PDEs and their applications'' (Prot. N.  2022BCFHN2) and by``Progetti Equazioni delle onde con condizioni iperboliche ed acustiche al bordo,  finanziati  con  i Fondi  Ricerca  di Base 2017--2022, della Universit\`a degli Studi di Perugia''.


\subsection*{Authors' contributions}
All  authors have equally contributed to the work.

\subsection*{Acknowledgements}
The authors wish to thank Dimitri Mugnai for many useful discussions.


\end{document}